\newtheorem{theo}{Theorem}[section]
\newtheorem{remar}[theo]{Remark}
\newtheorem{prop}[theo]{Proposition}
\newtheorem{corol}[theo]{Corollary}
\newtheorem{lemma}[theo]{Lemma}
\newtheorem{assumption}{Assumption}
\newcommand{\complex}{\mathbb{C}}
\newcommand{\natur}{\mathbb{N}}
\newcommand{\K}{K\"{a}hler}
\newcommand{\R}{\Bbb{R}}
\newcommand{\C}{\Bbb{C}}
\newcommand{\T}{\Bbb{T}}
\newcommand{\Z}{\Bbb{Z}}
\newcommand{\Proj}{\Bbb{P}}
\newcommand{\ep}{\epsilon}
\newcommand{\Ric}{\operatorname{Ric}}
\newcommand{\PGL}{\operatorname{PGL}}
\newcommand{\e}{\varepsilon}
\newcommand{\del}{\partial}
\theoremstyle{plain}
\def\Aut{\mathop{\hbox{Aut}}}
\def\Lie{\mathop{\hbox{Lie}}}
\begin{document}

\title[On homothetic balanced metrics] {On homothetic balanced metrics}

\author{Claudio Arezzo}
\address{Abdus Salam International Center for Theoretical Physics \\
                  Strada Costiera 11 \\
         Trieste (Italy) and Dipartimento di Matematica \\
         Universit\`a di Parma \\
         Parco Area delle Scienze~53/A  \\
         Parma (Italy)}
\email{arezzo@ictp.it}

\author{Andrea Loi}
\address{Dipartimento di Matematica \\
         Universit\`a di Cagliari}
         \email{loi@unica.it}

\author{Fabio Zuddas}
\address{Dipartimento di Matematica \\
          Parco Area delle Scienze 53/A \\
         Parma (Italy)}
\email{fabio.zuddas@unipr.it}

\thanks{
The authors are supported  by the F.I.R.B. 2008 Project \lq\lq Geometria Differenziale Complessa e Dinamica Olomorfa''}

\subjclass[2000]{53C55; 58C25;  58F06; 58E11}
\keywords{\K\  manifolds;  balanced metrics; regular  quantization; TYZ
asymptotic expansion;  constant scalar curvature metrics}.
\date{\today}

\begin{abstract}
In this paper we study the set of  balanced metrics (in Donaldson's terminology
\cite{do}) on a  compact complex manifold $M$ which are  homothetic to a given balanced one. This question is related to various properties of the Tian-Yau-Zelditch approximation theorem for \K\  metrics.
We prove that this set is finite when $M$ admits a   non-positive
\K--Einstein metric, in the case of non-homogenous   toric \K-Einstein manifolds  of dimension $\leq 4$
and in the case of the constant scalar curvature metrics
found in \cite{AP} and \cite{AP2}.
\end{abstract}

\maketitle

\section{Introduction}

A fundamental result of Tian (\cite{ti0}) states that any \K\ metric on a  compact manifold is the limit of projectively induced metrics. Moreover a quantitative and refined version of this result, due to Lu (\cite{lu}) and Zelditch (\cite{ze}), gives an asymptotic expansion (since then called Tian-Yau-Zelditch expansion) for a suitably chosen sequence of projective metrics. The nature of the coefficients of this expansion is a challenging and intriguing question, in some sense resembling, in a complex form, better known similar problems in Riemannian geometry such as that of isospectral manifolds. This circle of questions turns out to be relevant also in problems coming from the theory of geometric quantization and in the existence problem of constant scalar curvature \K\ metrics (\cite{do}) via the notion of balanced metrics introduced by Donaldson.

Of course it is of particular interest to characterize those \K\ manifolds whose coefficients of the associated  TYZ expansion are constants. We will observe in Section $2$ that this property is implied by having infinitely many proportional, here called homothetic, balanced metrics (a property already studied in the context of geometric quantization as recalled below) and it is related to another natural question about the characterization of the projectively induced metrics.

These properties are those central in this paper.

To enter more in detail, fix a positive line bundle $L$ over a compact complex manifold $M$ and  denote by ${\mathcal B}(L)$  the set of balanced metrics  on $M$ which are polarized either  with respect to  $L$ or some of its tensor powers,  namely,  $g_B\in {\mathcal B}(L)$ iff $g_B$ is balanced and  there exists a non-negative integer  $m_0$ such that $\omega_B$, the \K\ form associated to $g_B$, belongs to $c_1(L^{m_0})$.
For a fixed $g_B\in {\mathcal B}(L)$ consider the set of all balanced metrics homothetic to $g_B$, namely the set
${\mathcal B}_{g_B}=\{mg_B \  \mbox {is balanced}  \ | \ m\in\natur^+  \}$.

Obviously,  ${\mathcal B}_{g_B}\subset {\mathcal B}(L)$ for each  $g_B\in {\mathcal B}(L)$.
Notice that   two balanced metrics in  ${\mathcal B}(L)$ are isometric if and only if
their associated \K\ forms  are cohomologous (see \cite{arlcomm} or  Theorem  \ref{thmarloi} below)  and hence the cardinality of  ${\mathcal B}_{g_B}$ is a cohomological invariant.
For this reason we consider the quotient, denoted by ${\mathcal B}_c(L)$,  of ${\mathcal B}(L)$ by the equivalence relation which identifies two
balanced metrics if they belong to the same cohomology class.
Observe that if the polarized manifold $(M, L)$ is asymptotically Chow polystable then by a fundamental result of S. Zhang \cite{zha} (see the next section) the cardinality of  ${\mathcal B}_c(L)$ is infinite.
Moreover,  by a result in \cite{vezu} there exist examples  where  $(M, L^m)$ is not Chow polystable (even of constant scalar curvature) for $m$ large enough and hence, in this case, the  cardinality of   ${\mathcal B}_c(L)$, and hence that of
${\mathcal B}_{g_B}$, is finite (possibly zero).
On the other hand,  it is not hard to verify that any homogeneous integral  \K\ metric $g$ on a simply-connected homogeneous compact complex manifold is
such that  $mg$ is balanced for all  sufficiently large non-negative integers and hence the set ${\mathcal B}_{g_B}$, $g=g_B$,   (and a fortiori ${\mathcal B}_c(L)$) has infinite cardinality (see \cite{arlcomm}).
More generally, given a \K\ metric $g_B$ polarized with respect to  $L$ one tries to understand when $mg_B$ is balanced for all non-negative integers (and so our set ${\mathcal B}_{g_B}$ has infinite cardinality in this case).
If this happens the  corresponding geometric quantization is called regular. Regular quantizations play  a fundamental role in the theory of Berezin quantization by deformation developed by M. Cahen, S. Gutt and J. Rawnsley
in \cite{cgr1}, \cite{cgr2}, \cite{cgr3}, \cite{cgr4}.  A complete classification of regular quantizations is still missing
(we refer to \cite{arlquant}, \cite{arlcomm} and \cite{loibalcov}  for more details).
We believe that the \K\ manifolds which admits a   regular quantization or, more generally, those for which  ${\mathcal B}_{g_B}$ consist of infinite elements, are in some sense special. More precisely,
we believe the validity of the following

\vskip 0.3cm

\noindent
{\bf Conjecture:} Let $(M, L)$ be a polarized manifold. If there exists  a balanced metric $g_B\in {\mathcal B}(L)$
such that $\sharp {\mathcal B}_{g_B}=\infty$ then $(M, g_B)$ is a homogeneous \K\ manifold.

\vskip 0.3cm

Our main results are the following three results (see Sections \ref{secmainteor1}, \ref{secmainteor2} and \ref{secmainteor3} below   for details).

The first   observation, since \K--Einstein metrics with  non-positive scalar curvature are never projectively induced, is then the following:

\begin{prop}\label{mainteor1}
Let $L$ be a polarization of a compact  \K--Einstein manifold $(M, g)$ with  non-positive scalar curvature.
Then ${\mathcal B}_c(L)$  consists of  infinitely many  balanced metrics such that  for each   $g_B\in {\mathcal B}(L)$,  the set
${\mathcal B_{g_B}}$ is finite.
\end{prop}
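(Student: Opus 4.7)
The plan is to establish the two claims of the proposition separately. For the infiniteness of $\mathcal{B}_c(L)$, I would invoke asymptotic Chow polystability of $(M, L)$: in the strictly negative Kähler--Einstein case $\mathrm{Aut}_0(M)$ is trivial, so Donaldson's theorem on the existence of balanced metrics in $c_1(L^m)$ for all large $m$ applies, while in the Ricci-flat case the same output is provided by Mabuchi's asymptotic Chow polystability of polarized Calabi--Yau manifolds. In either situation, Zhang's theorem \cite{zha} recalled in the introduction places a balanced metric in $c_1(L^m)$ for every sufficiently large $m$, and these live in pairwise distinct cohomology classes, giving $\sharp\, \mathcal{B}_c(L) = \infty$.

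For the finiteness of $\mathcal{B}_{g_B}$, I would argue by contradiction. Fix $g_B \in \mathcal{B}(L)$ with $[\omega_B] = c_1(L^{m_0})$ and assume $m g_B$ is balanced for an infinite sequence of positive integers $m_j \to \infty$. Each $m_j g_B$ then polarizes $L^{m_j m_0}$ and, by Theorem \ref{thmarloi}, is identified via a holomorphic automorphism $\phi_j$ of $(M, L)$ with the Donaldson balanced representative $g_{m_j m_0}$ of its cohomology class. Donaldson's convergence theorem for the normalized balanced metrics then yields
\begin{equation*}
\phi_j^{*}\Big(\tfrac{g_B}{m_0}\Big) \;=\; \tfrac{1}{m_j m_0}\, g_{m_j m_0} \;\xrightarrow[j \to \infty]{C^\infty}\; g.
\end{equation*}
Since the isometry group of $g_B$ is compact, a subsequence $\phi_{j_k}$ converges in the holomorphic automorphism group to some $\phi_\infty$, and passing to the limit (the left-hand side being constant in $j$) I get $\phi_\infty^{*}(g_B/m_0) = g$, equivalently $g_B = m_0\, (\phi_\infty^{-1})^{*} g$.

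This closes the argument: $g_B$ would then be a positive constant multiple of a holomorphic pullback of the Kähler--Einstein metric $g$, hence itself Kähler--Einstein with non-positive scalar curvature, while being balanced forces it to be projectively induced. This contradicts the foundational fact, recalled at the start of the section, that no Kähler--Einstein metric with non-positive scalar curvature is projectively induced. The main technical obstacle I foresee is controlling the automorphism ambiguity $\phi_j$ in the Ricci-flat case, where $\mathrm{Aut}_0(M)$ may be positive-dimensional; but compactness of $\mathrm{Isom}(g_B)$, together with the holomorphy of each $\phi_j$ and the closedness of the automorphism group in the diffeomorphism group, is enough to extract the limiting $\phi_\infty$ and reach the contradiction.
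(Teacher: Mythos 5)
Your proof of the first claim matches the paper's in substance: both reduce to asymptotic Chow polystability via discreteness of $\Aut(M,L)/\C^*$ (finiteness of $\Aut(M)$ when $c_1(M)<0$; triviality of $h_0(M)$, hence of $\Lie(\Aut(M,L))$, when $c_1(M)=0$) and then invoke Donaldson/Zhang to place a balanced metric in $c_1(L^m)$ for every large $m$. For the second claim you take a genuinely different route. The paper feeds Hulin's theorem (no projectively induced K\"ahler--Einstein metric has non-positive scalar curvature) into part (1) of Lemma \ref{constcoeff2}, whose proof runs through Lemma \ref{constcoeff} (infinitely many homothetic balanced metrics force all TYZ coefficients to be constant, so $a_1^B$ constant makes $g_B$ cscK) and then the Chen--Tian uniqueness theorem to identify $g_B$ with $m_0 g$ up to an automorphism. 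You instead bypass the TYZ expansion entirely and use the dynamical content of Donaldson's theorem: the balanced representatives $m_j g_B$ of $c_1(L^{m_jm_0})$ are matched, via Theorem \ref{thmarloi}, with Donaldson's sequence, whose normalizations converge to $g$, and a limit of the matching automorphisms exhibits $g_B$ as $m_0$ times a pullback of $g$. Both arguments terminate at the same contradiction with Hulin. Your route trades the TYZ machinery and Chen--Tian for Donaldson's convergence statement plus a compactness argument; the paper's route is softer in that it never needs to control a sequence of automorphisms.

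That compactness step is the one weak point. The maps $\phi_j$ are \emph{not} isometries of $g_B$ (they intertwine $m_jg_B$ with a different metric for each $j$), so compactness of $\mathrm{Isom}(g_B)$ does not by itself furnish a convergent subsequence; in the Ricci-flat case $\Aut(M)$ can moreover be positive-dimensional. The gap is fillable: since $\phi_j^*(g_B/m_0)\to g$ in $C^\infty$, the $\phi_j$ are uniformly bi-Lipschitz from $(M,g)$ to $(M,g_B/m_0)$ for large $j$, hence equicontinuous, and Arzel\`a--Ascoli together with the fact that a locally uniform limit of holomorphic maps is holomorphic produces the desired $\phi_\infty$. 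Alternatively, you can avoid the automorphisms altogether by applying the \emph{second} half of Theorem \ref{thmdo} to the sequence $\tilde g_{m_jm_0}=m_jg_B$ itself: its normalization is constantly $g_B/m_0$, so that metric is cscK, and uniqueness in the class then gives $g_B=m_0F^*g$ directly. With either repair your argument is complete.
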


On the other hand projectively induced \K--Einstein manifolds $(M, g)$ with positive scalar curvature do exist, and it has been repeatedly claimed that only homogeneous manifolds have this property. Unfortunately all these proofs contain fatal errors. The proof of Theorem \ref{mainteor2} is based on the fact that a  \K--Einstein metric on toric manifolds of dimension $\leq 4$ are not projectively induced.  
This is the first class of \K--Einstein metrics on compact complex manifolds $M$, with $c_1(M)>0$ and large group of isometries, for which we can prove such property.

\begin{theo}\label{mainteor2}
Let $g$ be a  \K\--Einstein metric on a toric manifold $M$ of dimension $\leq 4$ and let $L=K^*$ be the anticanonical bundle over $M$.
Then ${\mathcal B}_c(L)$  consists of infinitely many balanced metrics. Moreover,   there exists   $g_B\in {\mathcal B}(L)$
such that   ${\mathcal B_{g_B}}$ is infinite if and only if $M$ is either a projective space or a product of projective spaces.
\end{theo}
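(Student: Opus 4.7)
The proof has three parts. For the first assertion, asymptotic Chow polystability of the \K--Einstein toric Fano manifold (known in dimensions $\leq 4$) combined with Zhang's theorem recalled in the introduction supplies balanced metrics in $c_1(L^m)$ for all large $m$, giving infinitely many distinct cohomology classes in $\mathcal{B}_c(L)$. The ``if'' direction of the ``moreover'' is immediate from the introductory remark: $\CC P^n$ and products of projective spaces are simply-connected compact homogeneous complex manifolds, so by \cite{arlcomm} the metric $mg$ is balanced for all sufficiently large $m \in \natur^+$ when $g$ is the natural homogeneous integral \K\ metric, making $\mathcal{B}_{g_B}$ infinite for a suitable $g_B \in \mathcal{B}(L)$.

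For the ``only if'' direction, suppose $g_B \in \mathcal{B}(L)$ satisfies $\sharp \mathcal{B}_{g_B} = \infty$, so that $m g_B$ is balanced for $m$ in some infinite subset $I \subset \natur^+$. By the Tian-Yau-Zelditch expansion $\rho_m(g_B) = \sum_{k \geq 0} a_k(g_B)\, m^{n-k}$, the balanced condition $\rho_m(g_B) \equiv \mathrm{const}$ at infinitely many levels $m \in I$ forces each coefficient $a_k(g_B)$ to be a constant function on $M$ (standard polynomial identity argument). In particular, constancy of $a_1 = S/2$ implies $g_B$ is cscK, and the uniqueness of cscK metrics on a toric Fano manifold admitting a \K--Einstein metric forces $g_B$ to be a positive multiple of $g$. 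Moreover, the balanced condition at any single $m \in I$ realizes $g_B$ (up to rescaling) as $\frac{1}{m} \varphi_m^* g_{FS}$ for the balanced embedding $\varphi_m : M \hookrightarrow \CC P^{N_m}$, so $g$ itself is projectively induced.

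At this stage, the key technical input is the assertion flagged in the introduction: \emph{a \K--Einstein metric on a toric manifold of dimension $\leq 4$ is projectively induced only if the manifold is a projective space or a product of projective spaces.} This is the main obstacle. The plan is to use the classification of \K--Einstein toric Fano manifolds in these dimensions and to rule out each non-homogeneous example --- notably the del Pezzo surface obtained by blowing up $\CC P^2$ at three points, together with the remaining non-homogeneous \K--Einstein toric Fano examples in dimensions three and four --- by exhibiting an explicit obstruction to projective induction. One concrete route is to invoke Calabi's criterion for \K\ metrics to admit an isometric immersion into $\CC P^N$ and to show, via the symplectic-potential description of the \K--Einstein metric on the moment polytope, that this criterion fails in the non-homogeneous cases; an alternative is to exploit Lu's explicit formulae for $a_2$ and $a_3$ in terms of $S$, $|\Ric|^2$, and $|R|^2$ to derive a direct computational contradiction on each non-homogeneous manifold.
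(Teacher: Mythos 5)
Your overall architecture matches the paper's: asymptotic Chow polystability plus Zhang's theorem for the first assertion, homogeneity of (products of) projective spaces for the ``if'' direction, and for the ``only if'' direction the chain ``infinitely many homothetic balanced metrics $\Rightarrow$ all TYZ coefficients constant $\Rightarrow$ cscK $\Rightarrow$ (by uniqueness of cscK in the class, i.e.\ Chen--Tian) a multiple of the K\"ahler--Einstein metric is projectively induced,'' which is exactly the content of Lemma \ref{constcoeff2}(1) in the paper. Two small imprecisions: what the argument yields is that \emph{some positive integer multiple} $m m_0 g$ is projectively induced (polarized by $(K^*)^{mm_0}$), not $g$ itself, so the key non-embeddability statement must be proved for all powers of $K^*$ --- the paper handles this at the end of the proof of Proposition \ref{corollary} by showing the potential $G$ for $(K^*)^{\alpha}$ must be an $\alpha$-th power $F^{\alpha}$; and the uniqueness step should be quoted as Chen--Tian uniqueness of cscK metrics in a fixed K\"ahler class up to automorphism, not a toric-specific fact.

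The genuine gap is that the statement you yourself flag as ``the main obstacle'' --- that a K\"ahler--Einstein metric on a non-product toric manifold of dimension $\leq 4$ is never projectively induced --- is not proved; you only name two candidate strategies (Calabi's criterion via the symplectic potential, or Lu's formulae for $a_2,a_3$) without executing either, and neither is what the paper does. The paper's actual mechanism (Lemma \ref{lemma} and Proposition \ref{corollary}) is: torus invariance forces a projectively induced metric in $c_1(K^*)$ to have potential $\frac{i}{2}\partial\bar\partial\log F$ with $F=\sum_I a_I x^I$ a polynomial in $x_i=|z_i|^2$ whose coefficients are \emph{strictly positive exactly on} $\Delta\cap\ZZ^n$; the Einstein condition then becomes the polynomial identity $\det(A)=cF^{2n-1}$, and for each of the five non-product fans in the Batyrev--Selivanova classification, comparing finitely many Taylor coefficients of both sides at the origin forces some $a_I$ with $I\in\Delta\cap\ZZ^n$ to vanish --- a contradiction. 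Without this (or a completed version of one of your two sketches, neither of which obviously closes: Calabi's criterion is very hard to check from the non-explicit symplectic potential, and constancy of $a_2,a_3$ alone does not obstruct projective induction), the ``only if'' direction is unproved.
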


Passing from \K -Einstein to constant scalar curvature (cscK) metrics we prove that in what is at present
the greatest source of examples, namely the blow up gluing procedure developed in
\cite{AP} and \cite{AP2}, the second coefficient of the TYZ asymptotic expansion is never constant, thanks to some special properties of the LeBrun-Simanca model for the gluing procedure. This implies the following:

\begin{theo}\label{mainteor3}
Let $g$ be a cscK metric on a compact complex manifold and let
$g_{\e}$, $\ep >0$,  be a family of cscK metrics constructed as in
\cite{AP} and \cite{AP2} on the blow-up $\tilde M = Bl_{p_1,\dots ,p_k}M$ of $M$ at
the points $p_1, \dots , p_k$ of $M$.
Let  $\e$ be  a sufficiently small rational number, say $\e = \frac{p}{q}$,
and let  $L_{\e} \rightarrow \tilde M$ be a polarization for the \K\ class
of the metric $qg_{\e}$. Then,  for each   $g_B\in {\mathcal B}(L_{\e})$,  the set
${\mathcal B_{g_B}}$ is finite.
\end{theo}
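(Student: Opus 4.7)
The plan is to argue by contradiction. Suppose there exists $g_B\in\mathcal B(L_\varepsilon)$ with $\sharp\mathcal B_{g_B}=\infty$, so that $mg_B$ is balanced for infinitely many $m\in\NN^+$. I would first invoke the principle (recalled in Section~2, and already used in the introduction to motivate the whole study) that having infinitely many balanced integer multiples forces every coefficient $a_j$ of the TYZ asymptotic expansion for $g_B$ to be a constant function on $\tilde M$. Constancy of $a_1$ alone forces $g_B$ to be cscK, and using Lu's explicit formula
$$a_2=\tfrac{1}{3}\Delta\rho_{g_B}+\tfrac{1}{24}\bigl(|R_{g_B}|^2-4|\mathrm{Ric}_{g_B}|^2+3\rho_{g_B}^2\bigr),$$
constancy of $a_2$ then reduces to the pointwise statement that $|R_{g_B}|^2-4|\mathrm{Ric}_{g_B}|^2$ is constant on $\tilde M$. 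This is the geometric obstruction I would try to detect.

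Next, I would localise this obstruction on the geometry of $\tilde M=\mathrm{Bl}_{p_1,\dots,p_k}M$. The K\"ahler class $[\omega_{g_B}]$ is a positive integer multiple of $[\omega_{g_\varepsilon}]$, and $g_\varepsilon$ is itself cscK in its class. Under the hypotheses of \cite{AP}, \cite{AP2} the space of holomorphic vector fields on $\tilde M$ is constrained enough that the cscK metric in $[\omega_{g_B}]$ is unique up to scaling and biholomorphism, so $g_B=c\,\Phi^*g_\varepsilon$ for some $c>0$ and $\Phi\in\Aut(\tilde M)$. Since $|R|^2-4|\mathrm{Ric}|^2$ is invariant under pullback by biholomorphic isometries and rescales by $c^{-2}$ under $g\mapsto cg$, constancy for $g_B$ is equivalent to constancy of $|R_{g_\varepsilon}|^2-4|\mathrm{Ric}_{g_\varepsilon}|^2$ on $\tilde M$ itself.

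The heart of the proof is then an explicit computation on the LeBrun-Simanca model $g_{LS}$ on $\mathrm{Bl}_0\CC^n$. Near each exceptional divisor the construction of \cite{AP}, \cite{AP2} presents $g_\varepsilon$ as a $C^4$-small perturbation in K\"ahler potential of the model $g+\varepsilon^2g_{LS}$, so in the bubble region the quantity $|R_{g_\varepsilon}|^2-4|\mathrm{Ric}_{g_\varepsilon}|^2$ is dominated, up to lower-order corrections in $\varepsilon$, by the $\varepsilon^{-4}$-scaled contribution from $g_{LS}$. The LeBrun-Simanca metric is scalar-flat K\"ahler but not Ricci-flat, and its $U(n)$-invariance reduces all curvature scalars to explicit functions of the radial variable; a direct calculation from the Burns-LeBrun-Simanca ODE shows that $|R_{g_{LS}}|^2-4|\mathrm{Ric}_{g_{LS}}|^2$ is not constant (in particular it decays at spatial infinity while being non-zero on the exceptional divisor). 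Comparing the $\varepsilon^{-4}$-contribution in the bubble region with the bounded value of $|R_g|^2-4|\mathrm{Ric}_g|^2$ on the complement then produces a non-constant curvature scalar on $\tilde M$ for every sufficiently small $\varepsilon$, contradicting the first step.

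The main obstacle is this last step. To make the comparison rigorous one needs careful book-keeping of the Arezzo-Pacard error estimates through two derivatives of the metric, so that the correction terms do not cancel the leading non-constant $\varepsilon^{-4}$ contribution to $|R|^2-4|\mathrm{Ric}|^2$. A secondary delicate point is the cscK uniqueness used to replace $g_B$ by $g_\varepsilon$: when non-trivial holomorphic vector fields persist after blow-up, one should either replace the uniqueness argument with a direct computation of $a_2$ for every cscK representative of the class, or invoke the appropriate extension of Bando-Mabuchi. Apart from this, all ingredients are essentially explicit, and the mechanism by which the LeBrun-Simanca bubble forbids the constancy of $a_2$ is the same mechanism that in the introduction makes homothetic balanced metrics so restrictive.
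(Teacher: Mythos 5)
Your proposal follows essentially the same route as the paper: infinitely many homothetic balanced metrics force all TYZ coefficients to be constant (Lemma \ref{constcoeff}), the Chen--Tian uniqueness of cscK metrics in a fixed class reduces everything to $g_{\e}$ itself (this is exactly part (2) of Lemma \ref{constcoeff2}), and the contradiction comes from the non-constancy of $|R|^2-4|\mathrm{Ric}|^2$ for the LeBrun--Simanca model on $Bl_0\CC^n$, computed from its asymptotics (Proposition \ref{m=2} and the accompanying lemma). The only technical difference is that where you propose explicit book-keeping of the Arezzo--Pacard error estimates, the paper avoids this by arguing along a sequence $\e_j\to 0$ and using the smooth convergence of the rescaled metrics $\e^{-2}g_{\e}$ to $g_{lbs}$, which transfers the would-be constancy directly to the model.
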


The authors believe that the computation of the cardinality of ${\mathcal B}_{g_B}$ or the
proof of the  existence  of an upper bound  of this cardinality when $g_B$ is varying in ${\mathcal B}(L)$ is a very hard   and intriguing  problem which could shed some  light to the understanding of balanced metrics and of the stability of the polarized manifold $(M, L)$.

\vskip 0.3cm
 The paper is organized as follows. In the next section we describe the link between balanced and projectively induced \K\ metrics, we
 recall  the TYZ expansion and we use it
to prove two lemmata needed in the proof of the main results.
Sections \ref{secmainteor1}, \ref{secmainteor2} and \ref{secmainteor3} are dedicated to the proofs of
Proposition \ref{mainteor1},  Theorem \ref{mainteor2} and Theorem \ref{mainteor3} respectively.

\vskip 0.3cm

\noindent
{\bf Acknowledgements:} We wish to thank Prof. Zhiqin Lu  for various interesting and stimulating discussions.

\section{Balanced and projectively induced metrics}

Let $g$ be a \K\ metric on a compact complex manifold $M$.
In the quantum mechanics terminology $(M, g)$ is said to be  {\em quantizable} if the \K\ form $\omega$
associated to $g$ is integral, i.e. there exists a holomorphic line bundle $L$ over $M$, called the {\em quantum line bundle},  whose first Chern class equals the second  De-Rham cohomology class of $\omega$, i.e.  $c_1(L)=[\omega]_{dR}$.
By Kodaira's theory this is equivalent to say  that $M$ is a projective algebraic manifold and $L$ is a positive (or ample) line bundle over $M$.
In algebraic-geometric terms  $L$  is said to be a {\em polarization} of $M$, $g$   a {\em polarized} metric and
the  pair $(M, L)$  a {\em polarized manifold}.
Fix a polarization   $L$ over $M$.
Then there exists
an hermitian metric  $h$  on
$L$, defined up to the multiplication  with a postiive constant,
such that its Ricci curvature $\Ric
(h)=\omega$ \footnote{$\Ric (h)$ is the two--form on $M$ whose
local expression is given by
$\Ric (h)=-\frac{i}{2}
\partial\bar\partial\log h(\sigma (x), \sigma (x)),$
for a trivializing holomorphic section $\sigma :U\rightarrow
L\setminus\{0\}$.}.
The  pair $(L, h)$ is called a {\em geometric
quantization} of the  K\"{a}hler manifold $(M, \omega)$.
Let  $s_0, \dots , s_{N}$  be an orthonormal basis of $H^0(L)$ (the space  of  global holomorphic sections  of $L$) with respect to the scalar product
$$\langle s, t \rangle  =\int_Mh(s(x), t(x))\frac{\omega^n(x)}{n!}, \ s, t\in H^0(L).$$
Consider the  non-negative smooth  function
$T_{g}$ on $M$ given by:
\begin{equation}\label{Tmo}
T_{g} (x) =\sum_{j=0}^{N}h(s_j(x), s_j(x)).
\end{equation}
As suggested by the notation this function depends only on the
\K\ form  metric $g$ and not on the orthonormal basis chosen.

The function $T_{g}$ has appeared in the literature under different
names. The earliest one was probably the $\eta$-function of J.
Rawnsley \cite{ra1} (later renamed to $\theta$ function in
\cite{cgr1}), defined for arbitrary (not necessarily compact) K\"{a}hler manifolds,
followed by the {\em distortion function } of G. R. Kempf \cite{ke} and S. Ji
\cite{ji}, for the special case of Abelian varieties and of S. Zhang
\cite{zha} for complex projective varieties.
The metrics for which
$T_{g}$ is constant were called {\em critical} in \cite{zha} and {\em
balanced}  in \cite{do}.

The two   fundamental  results about existence and uniqueness of balanced metrics  are summarized in the following two theorems.
\begin{theo}\label{thmdo} (S. Donaldson \cite{do})
Let $(L, h)$ be a geometric quantization
of a compact \K\ manifold $(M, \omega)$
such that the polarized metric  $g$ whose associated \K\ form  $\omega$ has constant scalar curvature.
Assume that $\frac{\Aut (M, L)}{{\C}^*}$ \footnote{{$\frac{\Aut(M, L)}{{\C}^*}$
denotes the group biholomorphisms of
$M$ which lift to holomorphic bundles maps
$L\rightarrow L$ modulo the trivial automorphism
group ${\complex}^*$}. } is discrete.
Then, for all  sufficiently large   integers  $m$ ,  there exists a unique balanced metric $\tilde g_m$ on $M$, with polarization
$L^m=L^{\otimes m}$, such that
$\frac{\tilde g_m}{m}$ $C^{\infty}$-converges to $g$.
Moreover if $\tilde g_m$ is a sequence of balanced metrics on $M$ with  $\tilde\omega_m\in c_1(L^m)$
such that
$\frac{\tilde g_m}{m}$ $C^{\infty}$-converges to a metric $g$ then $g$ has constant scalar curvature.
\end{theo}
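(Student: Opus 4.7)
The theorem splits into an existence/uniqueness statement for $\tilde g_m$ with $m\gg 0$, and a converse that any $C^\infty$-limit of rescaled balanced metrics is cscK. My plan follows the moment-map philosophy for both halves.

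For the existence half, the plan is to identify balanced embeddings with zeros of the moment map for the natural $SU(N_m+1)$-action on the symmetric space $\mathcal{M}_m=GL(N_m+1,\mathbb{C})/U(N_m+1)$ parametrising projective embeddings $M\hookrightarrow \mathbb{CP}^{N_m}$ in the class $|L^m|$, where $N_m+1=\dim H^0(L^m)$. Equivalently, balanced metrics are critical points of a convex functional $Z_m$ on $\mathcal{M}_m$ (the log-norm, or ``Mabuchi on the finite-dimensional stratum''). Two structural facts are decisive: (i) $Z_m$ is convex along geodesics of $\mathcal{M}_m$; (ii) thanks to the discreteness of $\Aut(M,L)/\mathbb{C}^*$, $\mathrm{Hess}(Z_m)$ is positive definite transverse to trivial directions, and one can establish a lower bound of the form $\mathrm{Hess}(Z_m)\gtrsim m^{-c}\,\mathrm{id}$ for some fixed $c$ depending only on $\dim M$.

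As seed I would take the projective embedding $\iota_m$ defined by an $L^2(h^m,\omega^n/n!)$-orthonormal basis of $H^0(L^m)$. By the Tian--Yau--Zelditch expansion recalled in the next section, the associated distortion function satisfies
\[
T_{mg}(x) \;=\; m^n + \tfrac12\operatorname{scal}(g)\,m^{n-1} + O(m^{n-2}),
\]
so constancy of $\operatorname{scal}(g)$ makes $\iota_m$ almost balanced with an error $O(m^{-2})$ relative to the leading term. I would then run a quantitative Newton iteration on $\mathcal{M}_m$: invert $\mathrm{Hess}(Z_m)$ against this small error, correct, and iterate to obtain a genuine critical point $\tilde g_m\in \mathcal{M}_m$, which is the sought balanced metric in $c_1(L^m)$. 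Strict convexity in the neighborhood where iteration converges yields uniqueness for each large $m$, and the rate of the TYZ error directly provides $C^\infty$-convergence $\tilde g_m/m\to g$.

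For the converse, assume $\tilde g_m$ is balanced with $\tilde\omega_m\in c_1(L^m)$ and $\tilde g_m/m\to g$ in $C^\infty$. The balanced condition forces $T_{\tilde g_m}$ to be the constant $\dim H^0(L^m)/\Vol(M)$. Writing $\tilde g_m = m\cdot(\tilde g_m/m)$ and applying TYZ to the sequence of base metrics $\tilde g_m/m$, the $m^{n-1}$-coefficient of the expansion is a fixed nonzero multiple of $\operatorname{scal}(\tilde g_m/m)$; constancy of $T_{\tilde g_m}$ forces this coefficient to be constant, and passing to the $C^\infty$-limit gives $\operatorname{scal}(g)=\mathrm{const}$. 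The hard part is the quantitative existence step: one must control $\mathrm{Hess}(Z_m)^{-1}$ uniformly in $m$ as the ambient dimension $N_m$ grows polynomially, and balance it against the TYZ error so that Newton iteration closes. This is exactly the step at which the hypothesis on $\Aut(M,L)/\mathbb{C}^*$ is indispensable, for otherwise infinitesimal automorphisms would produce a nontrivial kernel of $\mathrm{Hess}(Z_m)$ and obstruct the implicit-function argument.
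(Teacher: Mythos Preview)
The paper does not give its own proof of this theorem: it is quoted as a result of Donaldson, with a citation to \cite{do}, and serves only as background for the paper's later arguments. There is therefore no proof in the paper to compare your proposal against.

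That said, your sketch is broadly faithful to Donaldson's original strategy: the moment-map interpretation of balanced embeddings, the convex functional on $GL(N_m+1,\mathbb{C})/U(N_m+1)$, the use of the TYZ expansion to produce an approximately balanced embedding from the cscK metric, and a quantitative perturbation (controlling the Hessian as $N_m$ grows) to land on a genuine zero of the moment map. You have also correctly located where the discreteness of $\Aut(M,L)/\mathbb{C}^*$ enters.

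One caution on the converse direction: you apply the TYZ expansion to the \emph{varying} base metrics $\tilde g_m/m$ and then read off constancy of the scalar curvature from the $m^{n-1}$ coefficient. The expansion (\ref{asymptoticZ})--(\ref{rest}) as stated is for a fixed metric $g$, with remainder constants $C_{k,r}$ depending on $g$; to run your argument one needs uniformity of these constants over a $C^\infty$-neighbourhood of the limit metric. This can be arranged, but it is an extra step you should flag. Donaldson's own treatment of the converse is more directly tied to the moment-map picture rather than to a termwise reading of TYZ.
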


Besides the uniqueness part which is recalled below, Mabuchi (\cite{mab}) extended the above theorem to polarized manifolds with nontrivial automorphisms under certain conditions. Moreover, a beautiful dynamical version of the above theorem has been given by J. Fine in \cite{fi}.

From the GIT (geometric invariant theory) point of view given a polarized manifold  $(M, L)$, with $L$ very ample,   there exists a  balanced metric  $g$ whose associated \K\ form is in the class  of $c_1(L)$  if and only if   $(M, L)$ is Chow polystable (see \cite{zha} for a proof).  Since the Chow polystability is equivalent to the Chow stability  when $\frac{\Aut (M, L)}{{\C}^*}$  is discrete,
Theorem \ref{thmdo} can be equivalently stated by saying that given a  polarized manifold $(M, L)$ such that  $\frac{\Aut (M, L)}{{\C}^*}$ is discrete  and  $M$ admits a  constant scalar curvature metric in the class $c_1(L)$ then $(M, L)$ is asymptotically Chow stable
 (i.e. Chow stable for all $m$ sufficiently large). Notice  that the assumption on the automorphism group in Theorem \ref{thmdo}  cannot  be dropped entirely.
Indeed, from the point of view of the existence of balanced metrics  the recent results of  \cite{osy} and of A. Della Vedova and the third author \cite{vezu}  show  that there exist  a large class of  polarized manifolds  $(M, L)$ such that $M$ admits a  constant scalar curvature metric in the class $c_1(L)$  and such that   $(M, L^m)$ is not polystable, for all $m$ sufficiently large. Regarding the uniqueness of balanced metrics   the first  and the second  author   \cite{arlcomm} have  shown the following:
\begin{theo}\label{thmarloi}
Let  $g$ and $\tilde g$ be   two balanced metrics  whose associated \K\ forms are cohomologous  Then $g$ and $\tilde g$
are isometric, i.e.  there exists $F\in \Aut (M)$ such that $F^*\tilde g =g$.
\end{theo}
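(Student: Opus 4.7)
Since $\omega_g$ and $\omega_{\tilde g}$ are cohomologous, they both polarize the same power $L^{m_0}$, and the $\partial\bar\partial$-lemma gives $\varphi\in C^{\infty}(M)$ with $\omega_{\tilde g}=\omega_g+dd^c\varphi$, whence the Hermitian metrics on $L^{m_0}$ whose curvatures are $m_0\omega_g$ and $m_0\omega_{\tilde g}$ are related by $\tilde h=c\,e^{-m_0\varphi}h$ for some constant $c>0$. Let $\{s_j\}_{j=0}^N$ and $\{\tilde s_j\}_{j=0}^N$ be bases of $H^0(L^{m_0})$ orthonormal for $\langle\cdot,\cdot\rangle_g$ and $\langle\cdot,\cdot\rangle_{\tilde g}$ respectively, denote by $A\in GL(N+1,\CC)$ the change-of-basis matrix defined by $\tilde s_i=\sum_j A_{ij}s_j$, and let $\phi_g,\phi_{\tilde g}\colon M\to\PP^N$ be the associated Kodaira maps, so that $\phi_{\tilde g}=[A]\circ\phi_g$ for the induced $[A]\in PGL(N+1)$.

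The first ingredient is the observation, obtained from a short local computation in a holomorphic trivialization $e$ of $L^{m_0}$, that the balanced hypothesis $T_g\equiv\text{const}$ is equivalent to saying that $\omega_g$ is projectively induced via $\phi_g$, namely $\phi_g^{*}\omega_{FS}=m_0\omega_g$ (and analogously $\phi_{\tilde g}^{*}\omega_{FS}=m_0\omega_{\tilde g}$): this uses that $-\log h(e,e)$ is a local K\"ahler potential for $m_0\omega_g$ together with the identity $\sum_j|f_j|^2=T_g/h(e,e)$, where $s_j=f_j e$.

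The crux of the argument, and the step I expect to be the main obstacle, is showing that $[A]\in PU(N+1)$. The conceptual route is via the moment-map interpretation of the balanced condition: the equation $T_g\equiv\text{const}$ is precisely the zero-locus equation for the natural moment map of the $SU(N+1)$-action on the space of Hermitian inner products on $H^0(L^{m_0})$, so by the standard GIT-type uniqueness of moment-map zeros within a single complexified orbit, any two balanced inner products belonging to the same $GL(N+1)$-orbit (which is the case here, both living on the same finite-dimensional space $H^0(L^{m_0})$) must differ by an element of $\CC^{*}\cdot U(N+1)$. A direct route is also available: expanding the orthonormality relations $\langle\tilde s_i,\tilde s_j\rangle_{\tilde g}=\delta_{ij}$ using $\tilde s=As$, $\tilde h=c\,e^{-m_0\varphi}h$ and $\omega_{\tilde g}^n=(\omega_g+dd^c\varphi)^n$, and then applying balancedness on both sides to absorb the weight $e^{-m_0\varphi}$ into a geometric factor, one is reduced to an identity that forces $AA^{*}$ to be a scalar multiple of the identity.

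Once this is in place, $[A]$ is an isometry of $(\PP^N,\omega_{FS})$ carrying $\phi_g(M)$ onto $\phi_{\tilde g}(M)$, so $F:=\phi_{\tilde g}^{-1}\circ[A]\circ\phi_g$ is a well-defined biholomorphism of $M$ with $F^{*}(m_0\omega_{\tilde g})=F^{*}\phi_{\tilde g}^{*}\omega_{FS}=\phi_g^{*}\omega_{FS}=m_0\omega_g$, that is $F^{*}\tilde g=g$, as required.
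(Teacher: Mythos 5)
The paper does not prove Theorem \ref{thmarloi}; it quotes it from \cite{arlcomm}, where the argument is exactly the moment-map/Kempf--Ness one you outline. So your framework is the right one, but your execution of the crux step contains a genuine error. The Kempf--Ness uniqueness statement is: if $x$ and $g\cdot x$ are both zeros of the moment map for the compact group $K$, with $g$ in the complexification $K^{\CC}$, then $g\cdot x=k\cdot x$ for some $k\in K$ --- that is, $g\in K\cdot \mathrm{Stab}_{K^{\CC}}(x)$, \emph{not} $g\in K$. In the balanced-metric setting the relevant stabilizer is precisely the image of $\Aut(M,L)/\CC^*$ in $\PGL(N+1)$ acting on the embedded manifold (or its Chow point), and it is this stabilizer factor, not the unitary factor, that produces the automorphism $F$ in the statement. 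Your conclusion that $[A]\in \CC^*\cdot U(N+1)$ silently discards the stabilizer and therefore proves too much: since $\phi_{\tilde g}=[A]\circ\phi_g$, $\phi_g^*\omega_{FS}=m_0\omega_g$ and $\phi_{\tilde g}^*\omega_{FS}=m_0\omega_{\tilde g}$, unitarity of $[A]$ would force $\omega_g=\omega_{\tilde g}$, i.e.\ that cohomologous balanced metrics are \emph{equal}. This is false: on $\CC P^1$ with $L=O(1)$ the Fubini--Study metric and its pullback by a non-unitary element of $\PGL(2)$ are distinct, cohomologous, and both balanced; indeed the paper itself records that the balanced metrics in a fixed class form an $\Aut(M)$-orbit. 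The same objection defeats the proposed ``direct route'': no manipulation of the orthonormality relations can force $AA^*$ to be scalar, because the statement is not true.

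The final step also collapses: since by your own definition $\phi_{\tilde g}=[A]\circ\phi_g$ as maps $M\to\CC P^N$, the composition $F:=\phi_{\tilde g}^{-1}\circ[A]\circ\phi_g$ is the identity of $M$ and cannot be the required isometry. The correct endgame is: from the moment-map uniqueness write $A=UB$ with $U$ unitary (up to scalar) and $[B]$ preserving the image $\phi_g(M)$, hence inducing $F\in\Aut(M)$ with $\phi_g\circ F=[B]\circ\phi_g$; then $m_0\omega_{\tilde g}=\phi_{\tilde g}^*\omega_{FS}=\phi_g^*[B]^*[U]^*\omega_{FS}=(\phi_g\circ F)^*\omega_{FS}=F^*(m_0\omega_g)$, which is the assertion of the theorem. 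To make the moment-map step honest one must also set up the correct symplectic framework (Donaldson's finite-dimensional picture on the space of Hermitian inner products on $H^0(L^{m_0})$, with convexity of the Kempf--Ness functional along geodesics of $GL(N+1,\CC)/U(N+1)$), identify balanced inner products with its critical points, and verify that equality of the functional along the geodesic joining two critical points occurs only in stabilizer directions; this is the substantive content of the proof in \cite{arlcomm} and is missing from your sketch.
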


For  a polarization   $L$ over  $(M, g)$ and every non-negative integer $m\geq 1$ let us
consider the Kempf distortion function associated to $mg$, i.e.
\begin{equation}\label{Tm}
T_{mg} (x) =\sum_{j=0}^{d_m}h_m(s_j(x), s_j(x)),
\end{equation}
where  $h_m$ is an hermitian metric   on
$L^m$ such that $\Ric(h_m)=m\omega$
and   $s_0, \dots , s_{d_m}$, $d_m+1=\dim H^0(L^m)$, is  an orthonormal basis of $H^0(L^m)$
with respect to the $L^2$-scalar product
$$\langle s, t \rangle_m =\int_Mh_m(s(x), t(x))\frac{\omega^n(x)}{n!}, \ s, t\in H^0(L^m).$$
(In the  quantum geometric context
$m^{-1}$ plays the role of Planck's constant, see e.g.
\cite{arlquant}).

One can give a
quantum-geometric interpretation of $T_{mg}$   as follows.
Take $m$ sufficiently large such  that for each point $x\in M$
there exists $s\in H^0(L^m)$ non-vanishing at $x$ (such an $m$ exists by standard algebraic geometry methods
and corresponds to the free-based point condition in Kodaira's theory, see e.g. \cite{kn}).  Consider the
so called  {\em coherent states map}, namely the
holomorphic map of $M$ into the complex projective space
${\complex}P^{d_m}$ given by:
\begin{equation}\label{psiglob}
\varphi_m :M\rightarrow {\complex}P^{d_m}, \
x\mapsto [s_0(x): \dots :s_{d_m}(x)].
\end{equation}
One can prove (see, e.g.  \cite{arlcomm}) that
\begin{equation}\label{obstr}
\varphi ^*_m\omega_{FS}=m\omega +
\frac{i}{2}\partial\bar\partial\log T_{mg} ,
\end{equation}
where $\omega_{FS}$ is the Fubini--Study form on
${\complex}P^{d_m}$, namely the  \K\ form which in homogeneous
coordinates $[Z_0,\dots, Z_{d_m}]$ reads as \linebreak
$\omega_{FS}=\frac{i}{2}\partial\bar\partial\log \sum_{j=0}^{d_m}
|Z_j|^2$.
Since the equation  $\partial\bar\partial f=0$
implies that $f$ is constant,  it follows  by (\ref{obstr})
that $m g$  is balanced if and only if
it is projectively induced via the coherent states map.
Recall that   a polarized   K\"{a}hler metric $g$ on a complex manifold $M$
with polarization $L$ is
 {\em projectively induced} if there exists a basis $t_0, \dots  .t_N$  of   $H^0(L)$
 such that the holomorphic map  $\psi : M\rightarrow
{\complex}P^N, x\mapsto [t_0(x): \cdots : t_N(x)]$ induced by this basis, satisfies $\psi^*(g_{FS})=g$
(the author is referred to the seminal paper of E. Calabi \cite{ca} for more detalis on
the subject).
Notice that  there is a large class of  projectively induced \K\ metrics
which are not balanced. Indeed by Theorem \ref{thmarloi}  the set of balanced  metrics
on  a fixed cohomology class   is either empty or in bijection with the automorphism group $\Aut (M)$ of the manifold, while
by Calabi's rigidity theorem (see \cite{ca}) the set of projectively induced metrics in the same class are in bijection
with $ \PGL  (N+1)/U(N+1)\times \Aut (M)$ ($N=\dim H^0(L)-1$).
\footnote{
To have an explicit example take the metric  $g=\psi^*g_{FS}$ on $\C P^1$,
where $\psi:\C P^1\rightarrow \C P^2$ is the Veronese embedding, i.e. $\psi ([z_0, z_1])=[z_0^2, z_0z_1, z_1^2]$.
Then $g$ is  a projectively induced \K\   metric on $\C P^1$  polarized with respect to $O(2)$ and $g$ is not balanced.}

Not all K\"{a}hler metrics are balanced or projectively induced.
Nevertheless, G. Tian \cite{ti0} and  W. D. Ruan \cite{ru} solved  a conjecture posed by Yau
by showing that
$\frac{\varphi_m ^*(g_{FS})}{m}$ $C^{\infty}$-converges to $g$.
 In other words, any polarized
metric on a compact complex manifold is the $C^{\infty}$-limit of
(normalized) projectively induced K\"{a}hler metrics. S. Zelditch
\cite{ze} generalized the Tian--Ruan theorem by proving a complete
asymptotic expansion in the $C^\infty$ category, namely
\begin{equation}\label{asymptoticZ}
T_{mg}(x) \sim \sum_{j=0}^\infty  a_j(x)m^{n-j},
\end{equation}
where  $a_j(x)$, $j=0,1, \ldots$, are smooth coefficients with $a_0(x)=1$.
More precisely,
for any nonnegative integers $r,k$ the following estimates hold:
\begin{equation}\label{rest}
||T_{mg}(x)-
\sum_{j=0}^{k}a_j(x)m^{n-j}||_{C^r}\leq C_{k, r}m^{n-k-1},
\end{equation}
where $C_{k, r}$ is a constant depending on $k, r$ and on the
K\"{a}hler form $\omega$ and $ || \cdot ||_{C^r}$ denotes  the $C^r$
norm in local coordinates.
 Later on,  Z. Lu \cite{lu} (see also \cite{liulu}),  by means of  Tian's peak section method,
 proved  that each of the coefficients $a_j(x)$ in
(\ref{asymptoticZ}) is a polynomial
of the curvature and its
covariant derivatives at $x$ of the metric $g$ which can be found
 by finitely many algebraic operations.
 Furthermore,  he explicitely computes
$a_j$ for $j\leq 3$,
 i.e. (we omit the expression of $a_3$ since we do not need it in this paper)
\begin{equation}\label{coefflu}
\left\{\begin{array}{l}
a_1(x)=\frac{1}{2}\rho\\
a_2(x)=\frac{1}{3}\Delta\rho
+\frac{1}{24}(|R|^2-4|\Ric |^2+3\rho ^2)\\
\end{array}\right.
\end{equation}
where
$\rho$, $R$, $\Ric$ denote respectively the scalar curvature,
the curvature tensor and the Ricci tensor of $(M, g)$.
The reader is also referred to   \cite{loianal} and
\cite{loismooth}  for a  recursive formula of the $a_j$'s and  an alternative  computations of  $a_j$ for $j\leq 3$ using
Calabi's diastasis function (see also   the recent papers \cite{xu1} and \cite{xu2} for a graph-theoretic
interpretation of this recursive formula).
The expansion
(\ref{asymptoticZ}) is called the  {\em TYZ (Tian--Yau--Zelditch)
expansion}.  Together with Donaldson's moment maps techniques, it  is a key ingredient  in   the proof of Theorems \ref{thmdo} and \ref{thmarloi}
in the Introduction.

We end this section with two lemmata needed in the proofs of our theorems.

\begin{lemma}\label{constcoeff}
Let $g$ be any (not necessarily balanced)  polarized metric on  a compact complex manifold $M$.
Assume that  the set
$${\mathcal B}_{g}=\{mg\  \mbox {is balanced} \ | \ m\in\natur   \}$$
 consists of infinite elements. Then the coefficients $a_j(x)$ of the TYZ expansion of the Kempf distortion function $T_{mg}$  are constants
 for all $j=0, 1, \dots$.
\end{lemma}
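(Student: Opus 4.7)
The plan is to compare the Kempf distortion function at two arbitrary points $x_1,x_2\in M$ and use the balanced condition together with the TYZ asymptotic expansion (\ref{asymptoticZ}) to pin down the coefficients $a_j$ one at a time.

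First I would enumerate the balanced integers in increasing order: since $\sharp\mathcal B_g=\infty$, pick a sequence $m_\ell\in\NN$ with $m_\ell\to\infty$ and $m_\ell g$ balanced for every $\ell$. By the definition recalled after (\ref{obstr}) and the observation that $m g$ balanced is equivalent to $T_{mg}$ being constant on $M$, there exist constants $c_\ell$ with $T_{m_\ell g}(x)\equiv c_\ell$. Consequently, for every pair of points $x_1,x_2\in M$,
\begin{equation*}
T_{m_\ell g}(x_1)-T_{m_\ell g}(x_2)=0\qquad\text{for all }\ell\in\NN.
\end{equation*}

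Next I would prove, by induction on $j$, that $a_j(x_1)=a_j(x_2)$ for all $j\geq 0$. The case $j=0$ is trivial since $a_0\equiv 1$. Assume that $a_0,\dots,a_{j-1}$ are constant functions on $M$. Applying the uniform estimate (\ref{rest}) at the two points $x_1$ and $x_2$ with $k=j$, and subtracting, the contributions from the indices $i<j$ vanish by the inductive hypothesis, and we obtain
\begin{equation*}
\bigl|\bigl(a_j(x_1)-a_j(x_2)\bigr)\,m_\ell^{\,n-j}\bigr|
=\bigl|T_{m_\ell g}(x_1)-T_{m_\ell g}(x_2)\bigr|+O\!\bigl(m_\ell^{\,n-j-1}\bigr)
=O\!\bigl(m_\ell^{\,n-j-1}\bigr),
\end{equation*}
where the last equality uses the balanced condition. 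Dividing by $m_\ell^{\,n-j}$ and letting $\ell\to\infty$ yields $a_j(x_1)=a_j(x_2)$. Since $x_1,x_2$ were arbitrary, $a_j$ is constant on $M$, completing the induction.

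I do not expect any serious obstacle: the only points to be careful about are that (\ref{rest}) is uniform in $x$ (so the implicit constant in the $O$-term does not depend on $x_1$ or $x_2$), and that the sequence $m_\ell$ is allowed to be arbitrary as long as it diverges, which is guaranteed by $\sharp\mathcal B_g=\infty$. The argument is really just the standard fact that a formal asymptotic series in $m$ whose partial sums vanish along an unbounded sequence of integers must have vanishing coefficients, applied termwise to the difference $T_{mg}(x_1)-T_{mg}(x_2)$.
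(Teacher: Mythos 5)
Your proof is correct and follows essentially the same route as the paper's: induction on $j$ combined with the uniform estimate (\ref{rest}) applied along the unbounded sequence of balanced integers. The only cosmetic difference is that the paper exhibits $a_k(x)$ directly as the limit of the $x$-independent constants $m_s^{k-n}\bigl(T_{m_s}-\sum_{j<k}a_j m_s^{n-j}\bigr)$, whereas you show that the difference $a_j(x_1)-a_j(x_2)$ tends to zero for arbitrary $x_1,x_2$; the two formulations are trivially equivalent.
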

\begin{proof}
Assume that there exists an    increasing sequence $\{m_s\}_{s=1, 2, \dots}$  of non-negative integers such that $m_sg$
is a balanced metric, i.e.
$T_{m_sg}(x)=T_{m_s}$ for some positive constants  $T_{m_s}$.
We argue by induction on $j$. We already know that $a_0=1$ is a constant, so assume that the $a_j(x)$'s are constants, say $a_j$,  for $j=0, \dots ,k-1$.
By (\ref{rest}) we have
$$|T_{s, k, n}-a_k(x)m_s^{n-k} |\leq C_{k}m_s^{n-k-1}$$
for some constant $C_k$,
where $T_{s, k, n}$ is the constant (depending on $s, k$ and $n$) equal to $T_{m_s}-\sum_{j=0}^{k-1}a_jm_s^{n-j}$.
Hence $|m_s^{k-n}T_{s, k, n}-a_k(x)|\leq C_km_s^{-1}$ and letting $s\rightarrow\infty$ we get that $m_s^{k-n}T_{s, k, n}$ tends to  $a_k(x)$
which is then forced to be a constant.
\end{proof}

As a simple consequence of the previous lemma we get the following result which shows the validity of the Conjecture in the Introduction
in the one-dimensional case.

\begin{corol}
Let  $(M, L)$ be  a polarized  manifold and   $M$ have complex dimension $1$.
Assume that there exists    $g_B\in {\mathcal B}(L)$ such that
$\sharp {\mathcal B_{g_B}}=\infty$. Then   $M$ is biholomorphic to the the Riemann sphere $\C P^1$.
\end{corol}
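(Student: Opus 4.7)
The plan is to chain together Lemma \ref{constcoeff} and Proposition \ref{mainteor1}. Since $\sharp {\mathcal B}_{g_B}=\infty$, Lemma \ref{constcoeff} applied to $g=g_B$ implies that every coefficient $a_j(x)$ of the TYZ expansion of $T_{m g_B}$ is constant on $M$. Specialising to $j=1$, the explicit formula $a_1(x)=\tfrac{1}{2}\rho(x)$ recorded in (\ref{coefflu}) forces the scalar curvature $\rho$ of $g_B$ to be constant.

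Next I would exploit that $\dim_{\C}M=1$. In real dimension two the Ricci tensor of any Riemannian metric is automatically proportional to the metric (there is only one independent component), so a constant scalar curvature \K\ metric on a compact Riemann surface is necessarily \K--Einstein. Thus $(M,g_B)$ is \K--Einstein, and by Gauss--Bonnet (equivalently, by uniformization) the sign of $\rho$ coincides with the sign of the Euler characteristic of $M$: $\rho>0$ when $M$ has genus $0$, and $\rho\le 0$ when the genus of $M$ is at least one.

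To conclude, suppose for contradiction that $M$ is not biholomorphic to $\C P^1$, i.e.\ that $M$ has positive genus. Then $(M,g_B)$ is a compact \K--Einstein manifold with non-positive scalar curvature polarized by $L$, and Proposition \ref{mainteor1} yields that ${\mathcal B}_{g_B}$ is finite, contradicting $\sharp {\mathcal B}_{g_B}=\infty$. Hence $M\cong\C P^1$. There is essentially no obstacle beyond chaining the two cited results together; the only mild point of care is the automatic upgrade from constant scalar curvature to \K--Einstein in complex dimension one, which is why the higher-dimensional version stated as the Conjecture of the Introduction is much more delicate.
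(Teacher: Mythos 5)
Your argument is correct, and its first half (Lemma \ref{constcoeff} forces $a_1=\rho_B/2$ to be constant, hence $g_B$ is cscK, hence \K--Einstein in complex dimension one) is exactly the paper's. The two proofs diverge only in how positive genus is excluded. The paper argues directly: by Gauss--Bonnet a cscK metric on an elliptic curve is flat and on genus $\geq 2$ is a multiple of the hyperbolic metric, and neither can be projectively induced (citing \cite{loiherm}), whereas a balanced metric is projectively induced by (\ref{obstr}); contradiction. You instead quote Proposition \ref{mainteor1} as a black box. This is legitimate: in your contradiction scenario $g_B$ itself is a non-positively curved \K--Einstein metric lying in $c_1(L^{m_0})$, so the Proposition applies with $L^{m_0}$ in place of $L$ (a small point you gloss over --- $g_B$ is polarized by $L^{m_0}$, not necessarily by $L$ itself), and its finiteness conclusion for ${\mathcal B}_{g_B}$ contradicts the hypothesis. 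The price of this packaging is that Proposition \ref{mainteor1} rests on Hulin's theorem together with Lemma \ref{constcoeff2}, whose proof invokes the Chen--Tian uniqueness theorem and re-runs Lemma \ref{constcoeff}; none of that machinery is needed here, since the only fact actually used is the one the paper cites directly, namely that the one-dimensional non-positively curved models are never projectively induced while balanced metrics always are. So the underlying mathematics coincides; your route is simply one layer less direct.
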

\begin{proof}
Assume  $\sharp {\mathcal B_{g_B}}=\infty$ for some  $g_B\in {\mathcal B}(L)$. Then,   by Lemma \ref{constcoeff}, the coefficients $a_j^B$ of the TYZ expansion of  $T_{mg_B}$ are constants. In particular $a_1^B=\rho_B/2$ is constant, where $\rho_B$ is the scalar curvature  of $g_B$ (cfr.  (\ref{coefflu})). On the other hand
the flat metric on an elliptic curve and the hyperbolic metric on a Riemann surface of genus $\geq 2$ cannot be projectively induced
(see \cite{loiherm} for a proof) and hence $M$ is forced to be biholomorphic to  $\C P^1$ (and $g_B$ isometric to an integer multiple of the Fubini--Study metric).
\end{proof}

\begin{lemma}\label{constcoeff2}
Let $g$ be a cscK  metric on a compact complex manifold $M$  polarized with respect to a holomorphic line bundle $L$.
Assume $g$ satisfies one of the following conditions:
\begin{enumerate}
\item
$mg$ is not  projectively induced for all  $m$;
\item
there is  at least a non-constant coefficient $a_{j_0}$, with  $j_0\geq 2$, of  the TYZ expansion (\ref{asymptoticZ}) of the  Kempf distortion function $T_{mg}(x)$.
\end{enumerate}
Then,
for any $g_B\in {\mathcal B}(L)$,
the set  ${\mathcal B}_{g_B}$
consists of finitely many elements.
\end{lemma}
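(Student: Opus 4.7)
The plan is to argue by contradiction: suppose some $g_B\in\mathcal{B}(L)$ has $\sharp\mathcal{B}_{g_B}=\infty$, and aim to contradict whichever of (1), (2) is assumed. By definition, $\omega_B\in c_1(L^{m_0})$ for some $m_0\in\natur^+$, and by assumption there is a strictly increasing sequence $m_s\in\natur^+$ with $m_s g_B$ balanced. Lemma \ref{constcoeff} applied to $g_B$ then forces every coefficient $a_j^B(x)$ of the TYZ expansion of $T_{mg_B}$ to be constant; in particular $a_1^B=\rho_B/2$ is constant, so $g_B$ has constant scalar curvature.

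Next, I link $g_B$ to $g$: the rescaled metric $g_B/m_0$ is a cscK metric in the class $c_1(L)$, the same Kähler class as $g$. Invoking uniqueness of cscK metrics in a fixed Kähler class modulo automorphisms (Bando--Mabuchi in the Kähler--Einstein case, and results of Chen--Donaldson--Tian--Berman--Berndtsson in the general cscK case) yields $F\in\Aut(M)$ with $g_B=m_0 F^*g$. Then $mg_B=F^*(mm_0\,g)$ for every $m$, and since the Kempf distortion $T_{mg_B}$ transforms naturally under pullback by biholomorphisms, $m_s g_B$ balanced forces $m_s m_0\,g$ balanced. Hence $\mathcal{B}_g$ contains the infinite set $\{m_s m_0:s\geq 1\}$.

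It remains to close the argument in each case. Under hypothesis (1): by equation (\ref{obstr}) every balanced metric is projectively induced via its coherent states map, so $m_s m_0\,g$ is projectively induced for each $s$, directly contradicting the assumption that no $mg$ is projectively induced. Under hypothesis (2): since $\mathcal{B}_g$ is infinite, Lemma \ref{constcoeff} now applied to $g$ forces every coefficient $a_j(x)$ of the TYZ expansion of $T_{mg}$ to be a constant, contradicting the existence of a non-constant $a_{j_0}$ with $j_0\geq 2$. In both cases we conclude that $\mathcal{B}_{g_B}$ must be finite.

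The principal obstacle is the identification $g_B=m_0 F^*g$ in the second step: this rests on the (deep) uniqueness of cscK metrics in a Kähler class modulo the identity component of $\Aut(M)$, together with the (technically subtle) verification that the balanced condition is preserved under $F$---a point that is transparent when $F$ lifts to $\Aut(M,L)$, and can otherwise be handled by noting that $F^*L^{m_0}$ and $L^{m_0}$ are topologically isomorphic so that all the linear-algebraic ingredients of the Kempf distortion function transfer naturally. All subsequent reductions, being direct applications of Lemma \ref{constcoeff} and of the balanced $\Longleftrightarrow$ projectively induced dichotomy (\ref{obstr}), are routine.
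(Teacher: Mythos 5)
Your overall strategy coincides with the paper's: argue by contradiction, apply Lemma \ref{constcoeff} to $g_B$ to force all $a_j^B$ to be constant, read off from $a_1^B=\rho_B/2$ that $g_B$ is cscK, and then invoke uniqueness of cscK metrics in a fixed K\"ahler class modulo automorphisms (the paper cites Chen--Tian \cite{chentian}) to produce $F\in\Aut(M)$ with $F^*g_B=m_0g$, from which the contradictions with (1) and (2) are extracted. Up to that point your argument is the paper's argument.

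The one step that does not hold up as written is the transfer of the \emph{balanced} condition under $F$: you claim that $m_sg_B$ balanced forces $m_sm_0\,g$ balanced because $F^*L^{m_0}$ and $L^{m_0}$ are ``topologically isomorphic.'' Topological isomorphism of line bundles does not identify their spaces of holomorphic sections: in general $F^*L^{m_0}$ differs from $L^{m_0}$ by an element of $\mathrm{Pic}^0(M)$ (think of translations on an abelian variety), and the Kempf distortion function $T_{m_sm_0g}$ is built from $H^0(L^{m_sm_0})$, not from $H^0(F^*L^{m_sm_0})$. Since the lemma is stated for an arbitrary compact polarized $M$ (and is applied in the paper to cases with $c_1(M)=0$, where $\mathrm{Pic}^0$ need not be trivial), this is a genuine hole in your route to case (2), and also in your route to case (1), which you channel through the same balancedness transfer. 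The paper's proof never transfers balancedness: for (1) it only needs that $m_0g=F^*g_B$ is \emph{projectively induced}, which is a purely metric condition preserved under pullback by a biholomorphism (compose the Kodaira embedding with $F$); for (2) it uses Lu's theorem that the $a_j$'s are universal polynomials in the curvature of the metric, so constancy of the $a_j^B$ passes to $F^*g_B=m_0g$ and hence, by the homogeneity of the $a_j$'s under rescaling, to $g$ itself --- no second application of Lemma \ref{constcoeff} to $g$, and no line-bundle identification, is needed. Replacing your transfer step by these two observations closes the gap and recovers exactly the published proof.
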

\begin{proof}
Let $g_B\in {\mathcal B}(L)$, this means that $g_B$ is    a balanced metric on $M$ such that  its  associated \K\ form $\omega_B$ belongs to $c_1(L^{m_0})$
for some $m_0$.   Assume by a contradiction that  ${\mathcal B}_{g_B}$ has infinite elements. Then, by Lemma \ref{constcoeff},  the coefficients of the TYZ expansion of $g_B$, denoted by   $a_j^B$,  are constants for all  $j=0, 1, \dots$.
In particular, by the first  of  (\ref{coefflu}),  $g_B$ is cscK. Since $\omega_B$  is cohomologous to $m_0\omega$ and by assumption $g$ has constant scalar curvature it follows by a theorem of  X. X. Chen and G. Tian  \cite{chentian} that there exists an automorphism $F$ of $M$ such that $F^*g_B=m_0g$.
Since $g_B$ is projectively induced and the $a_j^B$'s are constants for all   $j=0, 1, \dots$ we get that: (a) $m_0g$ is projectively induced;
(b) the coefficients $a_j$'s of the TYZ expansion  of $T_{mg}$ are constants for all $j=0, 1, \dots$.. Since  (a) and (b) are in constrast with  (1) and (2) respectively
this yields the desired contradiction and concludes the proof of the lemma.
\end{proof}


It is interesting  pointing out that
there are examples, even in  complex dimension $1$, of
cscK metrics  such that all the coefficients of the associated  TYZ expansion are constant  and
$mg$ is not projectively induced for all non negative integer  $m$.
Take for example a  compact  Riemann surface $\Sigma$  with the  hyperbolic metric $g_{hyp}$ which is polarized
with respect to the anticanonical bundle.
Then, being $(\Sigma, g_{hyp})$ locally homegeneous all the coefficents $a_j$ of TYZ expansion
are constant,  more precisely $a_1$ is half of the constant scalar curvature and one can show that  $a_k=0$, for $k\geq 2$.
On the other hand, as we have already pointed out, $mg_{hyp}$ is not projectively induced (see \cite{loiherm} for a proof).

Finally, notice that   prescribing the values of  the  coefficients
of the  TYZ expansion gives rise to interesting elliptic PDE
as shown by Z. Lu and G. Tian  \cite{logterm}. The main result obtained there  is that if the log term
of the Szeg\"{o} kernel  of the unit disk bundle over $M$ vanishes then  $a_k=0$, for  $k>n$.
Hence, in the light of  the previous lemma and considerations, we  believe that the link between the Szeg\"{o} kernel and our results deserves further study.

\section{The proof of  Proposition   \ref{mainteor1}}\label{secmainteor1}
In this section  we assume that $L$ is a polarization of a compact complex manifold $M$ which admits a non-positively curved \K--Einstein metric $g$
in $c_1(L)$.
Obviously if $c_1(M)<0$ we can take $L=K$, where $K$ is the canonical bundle over $M$, while  when $c_1(M)=0$  the polarization could not exist, take for example  a complex torus which is not an abelian variety. Moreover,  in both cases the existence of a \K--Einstein metric with negative or zero scalar curvature is guaranteed by Yau's solution  of Calabi's conjecture.
Notice also  that in both cases the manifold $(M, L)$ is asymptotically Chow polystable. Indeed, when $c_1(M)<0$, $\Aut (M)$ is finite and hence
the assertion follows by Donaldson's  Theorem \ref{thmdo} above. On the other hand, if $c_1(M)=0$, it is well-known that the set $h_0(M)$ of holomorphic fields on $M$ with zeros is trivial. Since the Lie algebra of the identity component of $\Aut(M, L)$ is exactly $h_0(M)$ (see, for example, \cite{Ga}, Prop. 7.1.2), we conclude by applying Theorem \ref{thmdo} again.

Therefore the set ${\mathcal B}_c(L)$ defined in the previous section is infinite. On the other hand the metric $m g$ is not projectively induced for any
$m$ as it follows by a Theorem of D. Hulin \cite{hu} which asserts that the scalar curvature of a projectively induced \K\--Einstein  metric is  strictly positive.
Combining this fact with  Lemma \ref{constcoeff2} the proof of Proposition \ref{mainteor1} is immediate.

\section{\K-Einstein metrics on low-dimensional toric manifolds}\label{secmainteor2}

Let us briefly recall that a compact, complex manifold $M$ of complex dimension $n$ is said to be {\it toric} if it contains a complex torus $({\C}^*)^n$ as a dense open subset, together with a holomorphic action $({\C}^*)^n \times M \rightarrow M$ that extends the natural action of $({\C}^*)^n$ on itself. A basic fact in the theory of toric manifolds is that any such $M$ is determined by the combinatorial data encoded in a {\it fan of cones} in ${\R}^n$, that is a set of convex linear cones satisfying some properties which the interested reader can find, for example, in \cite{Fu}. Moreover, any ample linear bundle $L$ on $M$ corresponds to a polytope $\Delta_L = \{x \in {\R}^n \ | \ \langle x, u_i \rangle \leq \lambda_i, \ \ i=1 , \dots , d \}$, where $u_i$, $i=1, \dots, d$, are integral vectors which generate the edges of the cones in the fan and $\lambda_i \in {\Z}$. In particular, when $M$ is Fano the anticanonical bundle $K^*$ corresponds to the choice $\lambda_i = 1$, $i = 1, \dots, d$. This correspondence is such that in the coordinates $z = (z_1, \dots, z_n) \in ({\C}^*)^n$ defined on the open dense subset of $M$ diffeomorphic to the complex torus, a basis of the space $H^0(L)$ of global sections of $L$ is given by $S = \{ z^{J_0}, \dots , z^{J_N} \}$, where $\{J_0, \dots, J_N\} = \Delta_L \cap {\Z}^n$ and, for any $J = (j_1, \dots, j_n) \in {\Z}^n$, we set $z^J = z_1^{j_1} \cdots z_n^{j_n}$ .

 Since the fan associated to a toric manifold is determined up to the action of $SL(n, {\Z})$, we can always assume that it contains the $n$-dimensional cone generated by $-e_1, \dots, -e_n$, where  $\{e_1, \dots, e_n\}$ is the canonical basis of ${\R}^n$. Moreover, it is known (\cite{Au}) that polytopes which differ by a translation via a vector $v \in {\Z}^n$ represent isomorphic line bundles on the same toric manifold. Then, it follows that the anticanonical bundle can be represented by a polytope $\Delta$ which satisfies the following

\begin{assumption}\label{assumption}
$\Delta$ contains the origin $(0, \dots, 0)$ as vertex and the edge at this vertex is generated by $+e_1, \dots, +e_n$.  \end{assumption}

\begin{lemma}\label{lemma}
Under Assumption \ref{assumption}, a projectively induced toric metric $\omega \in c_1(K^*)$ writes in the coordinates $z_1, \dots, z_n$ on the open dense subset diffeomorphic to $({\C}^*)^n$ as $\omega = \frac{i}{2} \partial \bar \partial \log F$ where $F = \sum_I a_{I} x^I$ is a polynomial in $x = (x_1 = |z_1|^2, \dots, x_n = |z_n|^2)$ such that $a_{I} \geq 0$ and $a_{I} >0$ if and only if $I \in \Delta \cap {\Z}^n$.

\noindent Moreover, $\omega$ is \K-Einstein if and only if $F$ satisfies the following polynomial equality
\begin{equation}\label{detDIMn}
\det(A) = c F^{2n - 1}, \ \ \ A_{ij} = \left( F F_{ij} - F_i F_j \right) \bar z_i z_j + F F_j \delta_{ij}, \ \ \ c \in {\R}^+
\end{equation}
where $F_i = \frac{\partial F}{\partial x_i}$, $F_{ij} = \frac{\partial^2 F}{\partial x_i \partial x_j}$, \ $i,j=1, \dots, n$.
\end{lemma}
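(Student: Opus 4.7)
For the first assertion, write $\omega=\psi^{*}\omega_{FS}$ for a Kodaira embedding $\psi\colon M\to \C P^{N}$ associated with some basis of $H^{0}(K^{*})$, where $N+1=\#(\Delta\cap\Z^{n})$. For each $t\in T^{n}$, the toric invariance $t^{*}\omega=\omega$ together with Calabi's rigidity theorem produces a unique element $U_{t}$ of the projective unitary group such that $\psi\circ t=U_{t}\circ\psi$, and $t\mapsto U_{t}$ is a continuous homomorphism whose image, being compact connected abelian, is contained in a maximal torus of $\PGL(N+1)$. After a further pullback-preserving conjugation (allowed by rigidity) this torus may be taken to be the standard diagonal one; in such coordinates each $\psi_{i}$ is a $T^{n}$-weight vector in $H^{0}(K^{*})$, hence of the form $\lambda_{i}z^{J_{i}}$ with $J_{i}\in\Delta\cap\Z^{n}$. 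Since the $\psi_{i}$ form a basis, the weights exhaust $\Delta\cap\Z^{n}$ with all $\lambda_{i}\ne 0$, so on the torus chart $\omega=\tfrac{i}{2}\partial\bar{\partial}\log F$ with $F=\sum_{I\in\Delta\cap\Z^{n}}|\lambda_{I}|^{2}x^{I}$, proving the first claim.

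For the second assertion, using $x_{k}=z_{k}\bar z_{k}$ the chain rule gives $\partial_{i}F=F_{i}\bar z_{i}$, $\bar\partial_{j}F=F_{j}z_{j}$ and $\partial_{i}\bar\partial_{j}F=F_{ij}\bar z_{i}z_{j}+F_{j}\delta_{ij}$, whence a direct computation yields
\[
g_{i\bar{j}}=\partial_{i}\bar{\partial}_{j}\log F=\frac{A_{ij}}{F^{2}},\qquad \det(g_{i\bar{j}})=\frac{\det A}{F^{2n}},
\]
with $A_{ij}$ as in the statement. The K\"ahler--Einstein equation, which up to normalization reads $-\partial\bar\partial\log\det(g_{i\bar{j}})=\partial\bar\partial\log F$, then becomes, on the dense torus chart, $\partial\bar\partial\log(\det A/F^{2n-1})=0$. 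Hence $\det A/F^{2n-1}$ is the squared modulus of a holomorphic function on $(\C^{*})^{n}$ and, being $T^{n}$-invariant and single-valued, must have the form $C\prod_{i}|z_{i}|^{2m_{i}}$ for some constant $C>0$ and integers $m_{i}$.

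The crucial last step eliminates the ambiguity $m_{i}\ne 0$ and is where Assumption \ref{assumption} enters. Since $e_{i}\in\Delta\cap\Z^{n}$ we have $a_{e_{i}}>0$, so $F$ and $F_{i}$ are strictly positive at a generic point of the toric divisor $D_{i}=\{z_{i}=0\}\subset M$. Because $g$ extends smoothly across $D_{i}$, so does $\det A=F^{2n}\det(g_{i\bar{j}})$; evaluating $A$ at $D_{i}$ one checks that the $i$-th row and column collapse to the single diagonal entry $FF_{i}$, while the complementary $(n-1)\times(n-1)$ block remains positive definite, so $\det A$ stays positive along $D_{i}$. Consequently $\det A/F^{2n-1}$ is positive and smooth across each $D_{i}$, which is incompatible with a nontrivial factor $|z_{i}|^{2m_{i}}$; this forces $m_{i}=0$ for every $i$ and gives the identity $\det A=cF^{2n-1}$ on $(\C^{*})^{n}$, hence as polynomials. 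The converse is immediate by running the computation backwards. The principal difficulty is exactly this final step: a Kähler--Einstein metric determines $F$ only up to an a priori pluriharmonic ambiguity in $\det A/F^{2n-1}$, and it is precisely the positional information encoded in Assumption \ref{assumption} (guaranteeing $F_{i}\neq 0$ along $D_{i}$) that allows one to rule out the spurious monomial factors.
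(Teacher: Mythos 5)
Your proof is correct and follows essentially the same route as the paper: the same computation giving $g_{i\bar j}=A_{ij}/F^2$ and $\det(g_\omega)=\det(A)/F^{2n}$, the same reduction of the Einstein condition to the pluriharmonicity of $\log(\det(A)/F^{2n-1})$ (whose torus-invariant solutions are $\sum_i c_i\log x_i+const$), and the same use of Assumption \ref{assumption} — via $a_0,a_{e_i}>0$ — to force $c_i=0$, the paper doing this by letting $x\to 0$ at the torus-fixed point while you extend across the divisors $D_i$. The only substantive refinements are your use of Calabi rigidity to diagonalize the $T^n$-action on sections (which the paper glosses over) and a harmless imprecision in asserting the exponents $m_i$ are integers rather than arbitrary reals, which your divisor argument excludes in either case.
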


\begin{proof}
In the coordinates $z_1, \dots, z_n$, a generic element $v_i$ of a basis of $H^0(K^*)$ writes $\sum_k f_{ik} z^{J_k}$, where $\{J_0, \dots, J_N\} = \Delta \cap {\Z}^n$ and $(f_{jk}) \in GL(N+1, {\C})$, so a projectively induced toric metric (i.e. invariant by the action of the real torus ${\T}^n = \{(e^{i \theta_1}, \dots, e^{i \theta_n})\}$ on $M$ given on $({\C}^*)^n$ by
$$(z_1, \dots, z_n) \mapsto (e^{i \theta_1} z_1, \dots , e^{i \theta_n} z_n) \ )$$
 can be written as
$$\omega =\frac{i}{2}   \partial \bar \partial \log F(x_1, \dots, x_n),$$
 where $x_i = |z_i|^2$ and $F$ is a linear combination with real coefficients of the functions $x^I = x_1^{i_1} \cdots x_n^{i_n}$, where $I= (i_1, \dots, i_n) \in \Delta \cap {\Z}^n$. Notice that, since $(f_{jk}) \in GL(N+1, {\C})$, we have that the coefficient of $x^{J_l}$ in $F$ is given by $\sum_i |f_{il}|^2 > 0$. By Assumption \ref{assumption} and the convexity of $\Delta$ it follows that $\Delta \subseteq \bigcap \{ x_i \geq 0 \}$, so $F$ is in fact a polynomial. Notice that $\{e_1, \dots , e_n \} \subseteq \Delta \cap {\Z}^n$, so that $F = 1 + \alpha_1 x_1 + \cdots + \alpha_n x_n +$ (terms of higher order), with $\alpha_i >0$ for each $i = 1, \dots, n$.

Now, observe that the matrix $g_{\omega}$ of the metric associated to $\omega$ has entries $g_{i\bar j} = \frac{(F F_{ij} - F_{i} F_{j}) \bar z_i z_j + F F_j \delta_{ij}}{F^2}$, so that $\det(g_{\omega}) = \frac{\det(A)}{F^{2n}}$, where $A_{ij}$ is given by (\ref{detDIMn}).
By the well-known formula for the Ricci form $\rho_{\omega} = - i  \partial \bar \partial \log \det(g_{\omega})$ we then see that the Einstein condition is equivalent to the equation $\partial \bar \partial \Phi = 0$, where $\Phi = \log \left( \frac{\det(A)}{F^{2n-1}} \right)$.
Now, since $\Phi = \Phi(x_1, \dots , x_n)$, this is equivalent to the equations
$$\frac{\partial \Phi}{\partial x_i} + \frac{\partial ^2 \Phi}{\partial x_i^2} x_i = 0, \ i=1, \dots, n, \ \ \frac{\partial ^2 \Phi}{\partial x_i \partial x_j} = 0, \ i \neq j .$$
The last set of equations implies that $\frac{\partial \Phi}{\partial x_i}$ depends only on $x_i$, for every $i=1, \dots, n$, so that the first $n$ equations become ordinary differential equations whose general solution is $\frac{\partial \Phi}{\partial x_i} = c_i/x_i$, for $c_i \in {\R}$, and $\Phi = c_i \log x_i + f(x_1, \dots, x_{i-1}, x_{i+1}, \dots, x_n)$ for some real function $f$. Since this holds true for every $i = 1, \dots, n$, one concludes that
\begin{equation}\label{Philog}
\Phi = c_1 \log x_1 + \dots + c_n \log x_n + c_{n+1} = \log(x_1^{c_1} \cdots x_n^{c_n}) + c_{n+1} ,
\end{equation}
for $c_i \in {\R}$. We claim that $c_i = 0$ for $i \leq n$.
Indeed, since $F = 1 + \alpha_1 x_1 + \cdots + \alpha_n x_n +$ (terms of higher order), $\alpha_i > 0$, by $\Phi = \log \left( \frac{\det(A)}{F^{2n-1}} \right)$ and by noticing that at $z_1 = \dots = z_n = 0$ we have $A = diag(F F_1, \dots, F F_n)$, we see that when $x_1, \dots , x_n \rightarrow 0$ then $\Phi \rightarrow \log (\alpha_{1} \cdots \alpha_{n}) \in {\R}$. On the other hand, by (\ref{Philog}) one easily checks that $\Phi \rightarrow \pm \infty$ for $x_1, \dots, x_n \rightarrow 0$ along suitable curves in $({\R}^+)^n$ if at least one among the $c_i$'s, $i=1, \dots, n$, does not vanish (for example, if $c_i \neq 0$ take $x_i = t^k, x_j = t$ for $j \neq i$, for $k$ large enough). This proves the claim and then (\ref{detDIMn}), for $c = e^{c_{n+1}}$.
\end{proof}

We are now ready to prove the following  proposition interesting on its own sake.

\begin{prop}\label{corollary}
If $M$ is a smooth, compact toric $n$-dimensional  manifold, $n \leq 4$, then $M$ does not admit any projectively induced \K-Einstein metric unless it is a projective space or the  product of projective spaces.
\end{prop}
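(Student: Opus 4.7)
The plan is to proceed by a finite case-by-case analysis leveraging the classification of smooth toric Fano manifolds of dimension at most $4$. Such manifolds are finite in number up to isomorphism and are completely classified by the combinatorics of their fans; among them, the projective spaces and products of projective spaces correspond exactly to polytopes $\Delta$ which are simplices or products of simplices. Thus the task reduces to showing that no toric Fano $M$ outside the product-of-simplices class can carry a projectively induced \K--Einstein metric.

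By Lemma~\ref{lemma}, such a metric on $M$ would be encoded by a polynomial $F(x_1,\dots,x_n)=\sum_{I\in \Delta\cap\ZZ^n}a_I\,x^I$ with strictly positive coefficients on every lattice point of $\Delta$ (and zero outside), satisfying the polynomial identity $\det(A)=cF^{2n-1}$, where $A$ is the matrix appearing in (\ref{detDIMn}). The right hand side $cF^{2n-1}$ has Newton polytope exactly $(2n-1)\Delta$ and, since all $a_I>0$, every lattice point $P\in (2n-1)\Delta\cap\ZZ^n$ occurs there with strictly positive coefficient. The strategy is then to identify, in each remaining case, a boundary lattice point $P\in (2n-1)\Delta$ for which the coefficient of $x^P$ in $\det(A)$ is forced to vanish (or to take a sign incompatible with the positivity of $c$). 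Natural candidates for $P$ are lattice points of $(2n-1)\Delta$ that do not arise as scaled vertices of a simplex factor of $\Delta$: these are precisely the monomials whose matching on both sides of (\ref{detDIMn}) requires the combinatorial ``slack'' that is available only when $\Delta$ itself factors as a product of simplices.

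The main obstacle lies in executing this support-comparison uniformly across the non-product cases. Each entry of $A$ is a polynomial of controlled Newton-polytope type in $F$ and its derivatives, and $\det(A)$ is a signed sum over permutations; the coefficient of a chosen $x^P$ in $\det(A)$ is a specific polynomial in the $a_I$ that, a priori, can be intricate. In dimension $n=2$, where the only non-product toric Fanos are the Del Pezzo surfaces $\mathrm{Bl}_k\PP^2$ for $k=1,2,3$, the coefficient checks are tractable: for instance in the most symmetric case $\mathrm{Bl}_3\PP^2$ the polytope is a regular hexagon centered at the origin, and its $\fS_3$-symmetry together with invariance under $x\mapsto -x$ reduces the identity to a single scalar equation which can be seen to fail. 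In dimensions $n=3$ and $n=4$ the classification lists must be treated manifold by manifold, the unifying principle being that factoring $\Delta$ as a product of simplices is the precise polytope-theoretic condition for (\ref{detDIMn}) to admit a solution with full-support positivity of $F$. I expect the bulk of the work to consist in verifying this uniform statement via the relevant boundary-coefficient computation for each entry of the classification.
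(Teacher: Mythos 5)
Your overall frame---reduce to a finite classification, encode the metric in the polynomial $F$ of Lemma \ref{lemma}, and contradict the positivity of the coefficients $a_I$ for $I\in\Delta\cap\ZZ^n$---matches the paper's, but the proposal has genuine gaps. First, you run the case analysis over \emph{all} smooth toric Fano manifolds of dimension $\leq 4$; the paper instead invokes the Batyrev--Selivanova classification of those toric manifolds that actually admit a \K-Einstein metric, which leaves only five non-product cases in total (one surface, one threefold, three fourfolds). Your list would include the blow-ups of ${\C}P^2$ at one or two points and well over a hundred fourfolds, most of which carry no \K-Einstein metric at all; for those, the non-solvability of (\ref{detDIMn}) is not something a support comparison can plausibly detect. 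Second, and more seriously, the central mechanism is unsubstantiated: the coefficient of a monomial $x^P$, $P\in(2n-1)\Delta$, in $\det(A)$ is a polynomial in the $a_I$ with terms of both signs, and there is no combinatorial reason for it to vanish identically precisely outside the product-of-simplices case. The paper's contradiction is of a different nature: it matches low-order Taylor coefficients of $\det(A)$ and $cF^{2n-1}$ at the vertex $x=0$, solves the resulting explicit polynomial systems in the normalized derivatives $\tilde F_{x_1^{i_1}\cdots x_n^{i_n}}$, and shows that they force some $\tilde F_I=0$ with $I\in\Delta\cap\ZZ^n$ --- an explicit computation carried out separately in each of the five cases, not a Newton-polytope argument.

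Two further points. Your symmetry reduction for the blow-up of ${\C}P^2$ at three points is unjustified: $F$ is built from an arbitrary basis $(f_{jk})\in GL(N+1,\C)$ of $H^0(K^*)$, so while it is invariant under the real torus it need not inherit the $\fS_3\times\Z_2$ symmetry of the hexagon, and the identity does not collapse to ``a single scalar equation'' (which in any case you do not exhibit or show to fail). Finally, Lemma \ref{lemma} only covers $\omega\in c_1(K^*)$, whereas a projectively induced \K-Einstein metric may be induced by a basis of $H^0((K^*)^{\alpha})$ for some integer $\alpha\geq 2$; the paper closes this by showing that the corresponding potential $G$ must satisfy $\det(A)=cG^{2n-\frac{1}{\alpha}}$, which for a polynomial $G$ forces $G=F^{\alpha}$ and reduces to the anticanonical case. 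This step is absent from your proposal.
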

\begin{proof}
Suppose, contrary to the claim, that there exists a projectively induced \K-Einstein metric $\omega$ on $M$. Let us first assume that $\omega \in c_1(K^*)$.

Let us recall (\cite{BS}) that, for $n \leq 4$, the toric manifolds $M$ which admit a \K-Einstein metric are completely classified. More precisely, if $n = 2$, $M$ is either ${\C} {\Proj}^2$, ${\C} {\Proj}^1 \times {\C} {\Proj}^1$ or the blow-up of ${\C} {\Proj}^2$ at three points. This last case is associated to the fan in ${\R}^2$ whose set of edges is generated by
\begin{equation}\label{dim2}
\pm e_1, \ \pm e_2, \ \pm(e_1 - e_2).
\end{equation}
If $n=3$, either $M$ is ${\C} {\Proj}^3$, or can be decomposed into a product of lower dimensional manifolds or it is the manifold associated to the fan in ${\R}^3$ whose cones have the following generators:

\begin{equation}\label{dim3 1}
e_1, \ e_2, \ \pm e_3, \ -(e_1 + e_3), \ -(e_2 - e_3).
\end{equation}

Finally, if $n=4$, either $M$ is ${\C} {\Proj}^4$, or can be decomposed into a product of lower dimensional manifolds or it is the manifold associated to one of the following fans in ${\R}^4$ (given by the generators of their cones):
\begin{equation}\label{dim4 1}
\pm e_1, \ \dots, \ \pm e_4, \ \pm (e_1 + \cdots + e_4) \ ;
\end{equation}
\begin{equation}\label{dim4 2}
e_1, \ e_2, \ \pm e_3, \ \pm e_4, \ \pm (e_3 + e_4), \ -(e_1 - e_3), \ -(e_2 + e_3) \ ;
\end{equation}
\begin{equation}\label{dim4 3}
e_1, \ \dots, \ e_4, \ -(e_1 + \cdots + e_4), \ -(e_1 + e_2), \ -(e_3 + e_4), \ e_1 + e_3, \ e_2 + e_4 .
\end{equation}
In order to prove the Proposition, we first apply to each of the above fans a suitable transformation $A \in SL(n, {\Z})$ so that the anticanonical bundle of the corresponding manifold can be represented by a polytope $\Delta$ satisfying the properties given in Assumption \ref{assumption} and Lemma \ref{lemma} can be applied. The case (\ref{dim2}) already meets the required condition, while one easily verifies that the following matrices

\begin{displaymath} - I_3, \ \ \left( \begin{array}{cccc}
1 & 0 & \ 0 & \ 0\\
0 & 1 & \ 0 &\  0\\
0 & 0 & -1 &\ 0\\
0 & 0 & \ 0 & -1\\
\end{array} \right), \ \ \left( \begin{array}{cccc}
-1 &\  0 & 0 &\  0\\
\ 0 & -1 & 0 &\  0\\
\ 0 &\  0 & -1 &\  1\\
\ 0 & \ 0 & 0 & -1\\
\end{array} \right), \ \ \left( \begin{array}{cccc}
-1 & \ \ 0 &\  1 &\  0\\
\ 0 & -1 &\  0 &\  1\\
\ 0 &\  \ 0 & -1 &\  0\\
\ 0 &\  \ 0 &\  0 & -1\\
\end{array} \right)
\end{displaymath}

\noindent have this property, respectively for (\ref{dim3 1}), (\ref{dim4 1}), (\ref{dim4 2}), (\ref{dim4 3}). Then the anticanonical bundles are represented respectively by the polytopes $\Delta = $

\begin{equation}\label{politopoDim2}
\{ (x,y) \in {\R}^2 \ | \ 0 \leq x, y \leq 2, \ -1 \leq x - y \leq 1 \} ;
\end{equation}
\begin{equation}\label{politopoDim3}
\{ (x,y,z) \in {\R}^3 \ | \ x, y \geq 0, \ 0 \leq z \leq 2, \ x+z \leq 3, \ y-z \leq 1 \} ;
\end{equation}
\begin{equation}\label{politopoDim41}
\{ (x,y,z,w) \in {\R}^4 \ | \ 0 \leq x, y, z, w \leq 2, \ -1 \leq x + y - z - w \leq 1 \} ;
\end{equation}
\begin{equation}\label{politopoDim42}
\{ (x,y,z,w) \in {\R}^4 \ | \ x, y \geq 0, \ 0 \leq z, w \leq 2, \ -1 \leq z - w \leq 1, \ x-z \leq 1, \ y+z \leq 3 \};
\end{equation}
\begin{equation}\label{politopoDim43}
\begin{split}
\{ (x,y,z,w) \in {\R}^4 \ | \ x, y, z, w \geq 0,  \ x - z \leq 1, \ y - w \leq 1, \ z + w \leq 3, \  x + y \leq 1, \\ z + w - x - y \leq 1 \} .
\end{split}
\end{equation}

We are now going to treat each of the cases (\ref{politopoDim2})-(\ref{politopoDim43}) above separately. Let $F$ be the polynomial given in Lemma \ref{lemma}. We shall use the following notation:
$$F_{x_1^{i_1} \dots x_n^{i_n}} = \frac{\partial ^{i_1 + \cdots + i_n} F}{\partial x_1^{i_1} \dots \partial x_n^{i_n}}|_{x_1 = \cdots = x_n = 0}, \ \ \ \tilde F_{x_1^{i_1} \dots x_n^{i_n}} = \frac{F_{x_1^{i_1} \dots x_n^{i_n}}}{(F_{x_1})^{i_1} \cdots (F_{x_n})^{i_n}} .$$
In order to prove the Proposition, we check that equation (\ref{detDIMn}) implies in each of the cases (\ref{politopoDim2})-(\ref{politopoDim43}) that $\tilde F_{x_1^{i_1} \dots x_n^{i_n}} = 0$ for some $(i_1, \dots, i_n) \in \Delta \cap {\Z}^n$, against the first part of the statement of Lemma \ref{lemma}.

\medskip

\noindent
Case (\ref{politopoDim2}): the left-hand side $\det(A)$ in equation (\ref{detDIMn}) can be written more explicitly as
\begin{equation}\label{detindimens2}
\begin{split}
[(F_{11} F - F_1^2)(F_{22} F - F_2^2) - (F_{12}F - F_1 F_2)^2] x_1 x_2 + F_2 F (F_{11} F - F_1^2) x_1 +\\ +F_1 F (F_{22} F - F_1^2) x_2 + F_1 F_2 F^2 .
\end{split}
\end{equation}
By a straight calculation one gets the following equalities at $x_1=x_2=0$:
$$\det(A) = F_{x_1} F_{x_2}, \ \ \frac{\partial \det(A)}{\partial x_1} = F_{x_1}^2 F_{x_2} + F_{x_1} F_{x_1 x_2}, \ \ \frac{\partial \det(A)}{\partial x_2} = F_{x_2}^2 F_{x_1} + F_{x_2} F_{x_1 x_2},$$
$$\frac{\partial^2 \det(A)}{\partial {x_1}^2} = 2 F_{x_1}^2 F_{x_1 x_2} + F_{x_1^2 x_2} F_{x_1}, \ \ \frac{\partial^2 \det(A)}{\partial {x_2}^2} = 2 F_{x_2}^2 F_{x_1 x_2} + F_{x_1 x_2^2} F_{x_2} .$$
By comparing with the corresponding derivatives of the right-hand side of equation (\ref{detDIMn}), one gets $\tilde F_{x_1 x_2} = \tilde F_{x_1^2 x_2} = \tilde F_{x_1 x_2^2} = 2$. One then gets a contradiction by substituting these values into
$$\frac{\partial^2 \det(A)}{\partial x_1 \partial x_2}|_{x_1=x_2=0} = 4 F_{x_1} F_{x_2} F_{x_1 x_2} + 2 F_{x_2} F_{x_1^2 x_2} + 2 F_{x_1} F_{x_1 x_2^2} $$
and comparing with $\frac{\partial^2 (c F^3)}{\partial x_1 \partial x_2}|_{x_1=x_2=0}$.

\medskip

\noindent Case (\ref{politopoDim3}) : by calculating the derivatives of both sides of equation (\ref{detDIMn}) with respect to $x_2$ one gets the following system
\begin{displaymath}\left\{
\begin{array}{ll}\tilde F_{x_1 x_2} + \tilde F_{x_2 x_3} = 3 \\2 \tilde F_{x_1 x_2} \tilde F_{x_2 x_3} + \tilde F_{x_2^2 x_3} = 6 \\ \tilde F_{x_1 x_2} \tilde F_{x_2^2 x_3} = 2 \end{array} \right. \end{displaymath}
which has as unique solution $\tilde F_{x_1 x_2} = 1, \tilde F_{x_2 x_3} = 2, \tilde F_{x_2^2 x_3} = 2$.

By calculating the derivatives of both sides of equation (\ref{detDIMn}) with respect to $x_3$ one gets
\begin{small}
\begin{displaymath}\left\{
\begin{array}{ll} \tilde F_{x_1 x_3} + \tilde F_{x_2 x_3} + 2 \tilde F_{x_3^2} = 3 \\ \tilde F_{x_1 x_3^2} + 2 \tilde F_{x_1 x_3} \tilde F_{x_2 x_3} + \tilde F_{x_2^2 x_3} + 4 \tilde F_{x_3^2} (\tilde F_{x_1 x_3} + \tilde F_{x_2 x_3}) + \tilde F_{x_3^2}  = 6 + 3 \tilde F_{x_3^2} \\ \tilde F_{x_1 x_3^2} \tilde F_{x_2 x_3} + F_{x_2 x_3^2} \tilde F_{x_1 x_3}  + 2 \tilde F_{x_3^2} (\tilde F_{x_1 x_3^2} + 2 \tilde F_{x_1 x_3} \tilde F_{x_2 x_3} + \tilde F_{x_2^2 x_3}) + \tilde F_{x_3^2}(\tilde F_{x_1 x_3} + \tilde F_{x_2 x_3}) = 2 + 6 \tilde F_{x_3^2} \\ \tilde F_{x_1 x_3^2} \tilde F_{x_2 x_3^2} + 4\tilde F_{x_3^2} (F_{x_1 x_3^2} \tilde F_{x_2 x_3} + F_{x_2 x_3^2} \tilde F_{x_1 x_3}) + \tilde F_{x_3^2} (\tilde F_{x_1 x_3^2} + 2 \tilde F_{x_1 x_3} \tilde F_{x_2 x_3} + \tilde F_{x_2^2 x_3}) = 6 \tilde F_{x_3^2} + 3 \tilde F_{x_3^2}^2 \\ 2 \tilde F_{x_1 x_3^2} \tilde F_{x_2 x_3^2} + \tilde F_{x_1 x_3^2} \tilde F_{x_2 x_3} + \tilde F_{x_2^2 x_3} \tilde F_{x_1 x_3} = 3 \tilde F_{x_3^2} \\ \tilde F_{x_1 x_3^2} \tilde F_{x_2 x_3^2} = \tilde F_{x_3^2}^2 \end{array} \right. \end{displaymath} \end{small}
which, solved by substitution from the last equation, implies $\tilde F_{x_3^2} = \frac{1}{2}$ and then, from the first equation and by the previous system, $\tilde F_{x_1 x_3} = 0$, which contradicts $(1,0,1) \in \Delta \cap {\Z}^n$.

\medskip

\noindent
Case (\ref{politopoDim41}) : by calculating the derivatives of both sides of equation (\ref{detDIMn}) with respect to $x_4$ one gets the following system

\begin{displaymath}\left\{
\begin{array}{ll}\tilde F_{x_1 x_4^2} \tilde F_{x_2 x_4^2} = 4 \\ \tilde F_{x_2 x_4} \tilde F_{x_1 x_4^2} + \tilde F_{x_1 x_4} \tilde F_{x_2 x_4^2} = 8\\ \tilde F_{x_1 x_4^2} + 2 \tilde F_{x_1 x_4} \tilde F_{x_2 x_4} + \tilde F_{x_2 x_4^2} = 12 \\ \tilde F_{x_1 x_4} + \tilde F_{x_2 x_4} = 4 \end{array} \right. \end{displaymath}

\noindent which is seen to have as unique solution $\tilde F_{x_1 x_4} = \tilde F_{x_2 x_4} = \tilde F_{x_1 x_4^2} = \tilde F_{x_2 x_4^2} = 2$. By the symmetry of the equations defining the polytope, we also get
$\tilde F_{x_1 x_3} = \tilde F_{x_1 x_3^2} = \tilde F_{x_2 x_3} = \tilde F_{x_2 x_3^2} = \tilde F_{x_1^2 x_3} = \tilde F_{x_1^2 x_4} = \tilde F_{x_2^2 x_3} = \tilde F_{x_2^2 x_4} = 2$.

\noindent By considering the $\frac{\partial}{\partial x_1}$, $\frac{\partial^2}{\partial x_1 \partial x_2}$, $\frac{\partial^3}{\partial x_1^2 \partial x_2}$, $\frac{\partial^4}{\partial x_1^2 \partial x_2^2}$ derivatives of both sides of equation (\ref{detDIMn}) and taking into account the above data, we get the equations $\tilde F_{x_1 x_2 x_3} + \tilde F_{x_1 x_2 x_4} = 4$, $\tilde F_{x_1 x_2 x_3} \tilde F_{x_1 x_2 x_4} = 4$ which immediately yield $\tilde F_{x_1 x_2 x_3} = \tilde F_{x_1 x_2 x_4} = 2$. Again by the symmetry of the equations of the polytope, we have also $\tilde F_{x_1 x_3 x_4} = \tilde F_{x_2 x_3 x_4} = 2$.

\noindent Now, notice that $\Delta \cap \{ y=z=0 \}$ gives the $2$-dimensional polytope of Case (\ref{politopoDim2}). It follows that we can use the calculations made in that case in order to get the $\frac{\partial^2}{\partial x_1 \partial x_4}$ derivative of both sides of equation (\ref{detDIMn}), evaluated at $x_2=x_3=0$. By replacing in the result obtained the values found above, one easily gets a contradiction.

\medskip

\noindent Case (\ref{politopoDim42}) : by calculating the derivatives of both sides of equation (\ref{detDIMn}) with respect to $x_3$ one gets the following system

\begin{displaymath}\left\{
\begin{array}{ll}\tilde F_{x_1 x_3} \tilde F_{x_2 x_3} \tilde F_{x_3^2 x_4} = 2 \\ 2 \tilde F_{x_1 x_3} \tilde F_{x_2 x_3} \tilde F_{x_3 x_4} + (\tilde F_{x_1 x_3} + \tilde F_{x_2 x_3}) \tilde F_{x_3^2 x_4} = 8 \\ 2 \tilde F_{x_1 x_3} \tilde F_{x_2 x_3} + 2 \tilde F_{x_1 x_3} \tilde F_{x_3 x_4} + 2 \tilde F_{x_2 x_3} \tilde F_{x_3 x_4} + \tilde F_{x_3^2 x_4} = 12 \\ \tilde F_{x_1 x_3} + \tilde F_{x_2 x_3} + \tilde F_{x_3 x_4} = 4 \end{array} \right. \end{displaymath}

\noindent which is seen to have as unique solution $\tilde F_{x_1 x_3} = \tilde F_{x_2 x_3} = 1, \tilde F_{x_3 x_4} = \tilde F_{x_3^2 x_4} = 2$. By exchanging the role of $x_3$ and $x_4$, one finds an analogous system from which one similarly gets $\tilde F_{x_1 x_4} = \tilde F_{x_2 x_4} = 1, \tilde F_{x_3 x_4} = \tilde F_{x_3 x_4^2} = 2$. By substituting these data into the equations $\tilde F_{x_1 x_2} + \tilde F_{x_1 x_3} + \tilde F_{x_1 x_4} = 4$, $2 \tilde F_{x_2^2} + \tilde F_{x_1 x_2} + \tilde F_{x_2 x_3} + \tilde F_{x_2 x_4} = 4$ (which arise from equation (\ref{detDIMn}) derivated with respect to $x_1$ and $x_2$ respectively) one gets $\tilde F_{x_2^2} = 0$, which contradicts $(0,2,0,0) \in \Delta \cap {\Z}^n$.

\medskip

\noindent Case (\ref{politopoDim43}) : as in the previous case, by calculating the derivatives of both sides of equation (\ref{detDIMn}) with respect to $x_1$ one gets the system

\begin{displaymath}\left\{
\begin{array}{ll}\tilde F_{x_1 x_2} \tilde F_{x_1 x_4} \tilde F_{x_1^2 x_3} = 2 \\ 2 \tilde F_{x_1 x_2} \tilde F_{x_1 x_3} \tilde F_{x_1 x_4} + (\tilde F_{x_1 x_2} + \tilde F_{x_1 x_4}) \tilde F_{x_1^2 x_3} = 8 \\ 2 \tilde F_{x_1 x_2} \tilde F_{x_1 x_3} + 2 \tilde F_{x_1 x_2} \tilde F_{x_1 x_4} + 2 \tilde F_{x_1 x_4} \tilde F_{x_1 x_3} + \tilde F_{x_1^2 x_3} = 12 \\ \tilde F_{x_1 x_2} + \tilde F_{x_1 x_3} + \tilde F_{x_1 x_4} = 4 \end{array} \right. \end{displaymath}

\noindent which is seen as above to have as unique solution $\tilde F_{x_1 x_2} = \tilde F_{x_1 x_4} = 1, \tilde F_{x_1 x_3} = \tilde F_{x_1^2 x_3} = 2$. By exchanging the roles of $x_3$ and $x_4$ and of $x_1$ and $x_2$ one finds an analogous system from which one similarly gets $\tilde F_{x_2 x_3} = \tilde F_{x_1 x_2} = 1, \tilde F_{x_2 x_4} = \tilde F_{x_2^2 x_4} = 2$. By substituting these data into the equation $\tilde F_{x_1 x_3} + \tilde F_{x_2 x_3} = 4$ (which arises from equation (\ref{detDIMn}) derivated with respect to $x_3$) one gets a contradiction.

\medskip

In order to conclude the proof of Proposition \ref{corollary}, we need to consider the general case when the embedding inducing the \K-Einstein metric is given by a basis of the space $H^0((K^*)^{\alpha})$ of the global sections of a power $(K^*)^{\alpha}$ of the anticanonical bundle $K^*$.
From the theory of toric manifolds it is known that a polytope representing $(K^*)^{\alpha}$ is obtained from the polytope $\Delta$ representing $K^*$ by applying the homothety of constant $\alpha$. Now, in each of the cases (\ref{politopoDim2})-(\ref{politopoDim43}) one sees that $\Delta$ has $e_i$ as vertex, for some $i = 1, \dots, n$, so $\alpha \Delta$ is an integral polytope if and only if $\alpha$ is a positive integer (this means that $K^*$ is indivisible).
As above, we have that any projectively induced \K-Einstein metric representing $c_1((K^*)^{\alpha})$ writes in the coordinates $z_1, \dots , z_n$ as $\frac{i}{2}  \partial \bar \partial G$, where $G = G(x_1, \dots , x_n)$ is now a polynomial with positive coefficients in the monomials $x_1^{i_1} \cdots x_n^{i_n}$, $(i_1, \dots, i_n) \in \alpha \Delta \cap {\Z}^n$.
Since the Einstein equation reads now $\rho_{\omega} = \frac{2}{\alpha} \omega$, arguing as above one gets the following equation (with notations as above)
\begin{equation}\label{detDIMn2}
\det(A) = c G^{2n - \frac{1}{\alpha}}, \ \ \ A_{ij} = (G G_{ij} - G_{i} G_{j}) \bar z_i z_j + G G_j \delta_{ij}
\end{equation}
which, $G$ being a polynomial, cannot be true unless $G = F^{\alpha}$ for some polynomial $F = F(x_1, \dots, x_n)$.
So $\frac{i}{2}  \partial \bar \partial G = \alpha \frac{i}{2}  \partial \bar \partial F$, and $\frac{i}{2}  \partial \bar \partial F$ is a projectively induced, \K-Einstein metric belonging to $c_1(K^*)$, which contradicts what we have obtained in the first part of the proof.
\end{proof}

We can now  prove  Theorem \ref{mainteor2}.

\begin{proof}[Proof of Theorem \ref{mainteor2}]
It is known that the polarized manifold $(M, L)$ is asymptotically Chow polystable and hence the set
${\mathcal B}_c(L)$ has  infinite elements. For the reader's convenience,
let us outline a proof here.
By the Futaki's reformulation of the results proved in \cite{mab}, $(M, L)$ is asymptotically Chow polystable provided the Lie algebra characters $F_i: \Lie(\Aut(M)) \rightarrow \C$, $i = 1, \dots, n$, introduced in \cite{Fut}, vanish (see also \cite{FOS}). Now, the \K-Einstein toric $n$-dimensional manifolds, $n \leq 4$, are {\it symmetric}, i.e. the only character on the algebraic torus $G = (\C^*)^n \subseteq \Aut(M)$ which is invariant by the action by conjugation of the normalizer $N(G)$ of $G$ in $\Aut(M)$ is the trivial one (see, for example, \cite{BS}).
 One then sees that the restrictions of the $F_i$'s to the Lie algebra $\Lie(G)$ of $G$ must vanish. By Matsushima's results on \K-Einstein manifolds, $\Aut(M)$ is reductive so that $\Lie(\Aut(M)) = [\Lie(\Aut(M)), \Lie(\Aut(M))] + Z(\Lie(\Aut(M)))$, where $Z(\Lie(\Aut(M)))$ denotes the center of $\Lie(\Aut(M))$. So, by $Z(\Lie(\Aut(M))) \subseteq \Lie(G)$ ($G$ is a maximal torus) and the fact that the $F_i$'s are Lie algebra characters, we see that the $F_i$'s vanish on all $\Lie(\Aut(M))$ and this proves the claim by Futaki's above-mentioned result.

\noindent Now, let $g_B\in {\mathcal B}(L)$.  If $M$ is either a projective space or the product of  projective spaces then it is not hard to see that   the cardinality of ${\mathcal B}_{g_B}$ is infinite (see  Section 4 in \cite{arlcomm} for a  proof). Otherwise,  it follows by
Proposition \ref{corollary} and part (1) of Lemma \ref{constcoeff2} that   ${\mathcal B}_{g_B}$ consists of a finite numbers of
balanced metrics.
\end{proof}

\begin{remar}\rm
The main difficulty in extending this theorem to arbitrary dimensions is that there is not a classification of
toric manifolds for $n\geq 5$.
\end{remar}

\section{The constant scalar curvature   case}\label{secmainteor3}
Recall the following from \cite{AP} and \cite{AP2}:

\begin{theo}
\label{blow1}
Let $(M, g, \omega)$ be a constant scalar curvature \K\ metric on a compact $n$-dimensional complex
manifold.  Assume that
there is no nonzero holomorphic vector field vanishing somewhere on
$M$. Then, given finitely many points $p_1, \ldots, p_k \in
M$ and positive numbers $b_1, \ldots, b_k >0$, there exists $\e_0
>0$ such that the blow up $\tilde M = Bl_{p_1,\dots ,p_k}M$ of $M$ at $p_1, \ldots, p_k$
carries constant scalar curvature \K\ forms
\[
\omega_\e \in \pi^{*} \, [\omega] - \e ^{2} \, (b_1 \, PD[E_1] +
\ldots + b_k \, PD[E_k]),
\]
where the $PD[E_j]$ are the Poincar\'e duals of the $(2n-2)$-homology
classes of the exceptional divisors of the blow up at $p_j$ and $\e
\in (0,\e_0)$.
Morever, the sequence of metrics $g_\e$ converges to $g$ in ${\mathcal C}^{\infty} (M \setminus \{p_1, \ldots, p_k\})$. \label{th:2}

\medskip

If the scalar curvature of $g$ is not zero then the scalar
curvatures of $g_\e$ (the metric associated to $\omega_{\e}$) and  of $g$ have the same signs.
\label{th:1.1}
\end{theo}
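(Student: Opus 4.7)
The plan is to follow a gluing/perturbation scheme in the spirit of the original Arezzo--Pacard papers. The idea is in three layers: (i) build an approximate cscK metric $\tilde\omega_\e$ on $\tilde M$ by surgery, (ii) invert the relevant linear operator uniformly in $\e$, and (iii) perturb the approximate solution to a genuine cscK one by a contraction argument, then read off the $\mathcal{C}^\infty$ convergence and the sign of the scalar curvature.

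First I would construct $\tilde\omega_\e$. Outside small neighborhoods of the $p_j$ one keeps $\omega$ itself; in a punctured ball around each $p_j$ one works in \K\ normal coordinates, rescales a fixed scalar-flat ALE model $\eta$ on $\operatorname{Bl}_0\CC^n$ (the Burns metric for $n=2$, the LeBrun--Simanca model for $n\geq 3$) by $\e^2 b_j$, and interpolates between $\pi^*\omega$ and the rescaled model via a cutoff of \K\ potentials supported on an annulus of scale $\e^\alpha$, for a carefully chosen $\alpha\in(0,1)$. This yields a global \K\ form $\tilde\omega_\e$ on $\tilde M$ whose class is $\pi^*[\omega]-\e^2\sum_j b_j\,PD[E_j]$. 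A direct computation, comparing the scalar curvature of the glued potential on the annulus to that of $\omega$ and of $\eta$ separately, gives a pointwise bound on $\operatorname{Scal}(\tilde\omega_\e)-\bar s_\e$, where $\bar s_\e$ is the average scalar curvature; the bound is supported in the gluing annuli and decays as a definite power of $\e$ in weighted Hölder norms.

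Next I would set up the linear analysis. The natural operator is the Lichnerowicz operator $\mathbb{L}_\e\vp=(\bar\partial\nabla^{1,0})^*\bar\partial\nabla^{1,0}\vp$, whose kernel consists of potentials generating holomorphic vector fields. By assumption $(M,\omega)$ has no nonzero holomorphic vector field vanishing somewhere, and any holomorphic field on $\tilde M$ descends to such a field on $M$, so the kernel on $\tilde M$ reduces to the constants. In suitable weighted Hölder spaces $\mathcal{C}^{4,\alpha}_{\delta,\e}$ adapted to the two geometries (the punctured manifold $M\setminus\{p_j\}$ and the ALE model) one then obtains a uniform estimate $\|\vp\|\leq C\e^{-\kappa}\|\mathbb{L}_\e\vp\|$ for functions orthogonal to constants, by combining parametrices built separately on each model and patching them with the same cutoffs used in the construction of $\tilde\omega_\e$. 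This uniform invertibility, with the correct polynomial rate in $\e$, is the heart of the argument and the main obstacle.

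With the approximate solution and the linear estimate in hand, one closes the argument by a contraction mapping: one seeks $\vp_\e$ such that $\omega_\e:=\tilde\omega_\e+\tfrac{i}{2}\partial\bar\partial\vp_\e$ has constant scalar curvature, and rewrites this as a fixed point equation
\[
\vp = -\mathbb{L}_\e^{-1}\bigl(\operatorname{Scal}(\tilde\omega_\e)-\bar s_\e + Q_\e(\vp)\bigr),
\]
where $Q_\e$ gathers the nonlinear terms. Choosing $\alpha$ and the weight $\delta$ so that the product of the inverse bound $\e^{-\kappa}$ and the error of $\tilde\omega_\e$ is $o(1)$ as $\e\to 0$, one verifies that the right-hand side is a contraction on a small ball of $\mathcal{C}^{4,\alpha}_{\delta,\e}$, producing a unique small solution $\vp_\e$. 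Since $\vp_\e$ is uniformly small on compact subsets of $M\setminus\{p_1,\dots,p_k\}$ and $\tilde\omega_\e\to\omega$ smoothly there, one obtains the $\mathcal{C}^\infty$ convergence $g_\e\to g$ on such compacts. Finally, since $\bar s_\e\to\operatorname{Scal}(g)$, in the case $\operatorname{Scal}(g)\neq 0$ the scalar curvature of $g_\e$ has the same sign as $\operatorname{Scal}(g)$ for $\e$ small, which yields the last assertion.
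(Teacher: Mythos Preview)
Your sketch follows the Arezzo--Pacard gluing scheme and is essentially the correct strategy for this result. However, the paper does not prove this theorem at all: it is stated as a citation (``Recall the following from \cite{AP} and \cite{AP2}'') and used as input for the subsequent arguments, so there is no in-paper proof to compare against. What you have written is a faithful outline of the original proof in those references.
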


Let us denote by ${\mathfrak h}$ the space of  hamiltonian holomorphic vector fields and by
\[
\xi_\omega : M  \longmapsto  {\mathfrak h}^*
\]
the {\em  moment} map which is defined by requiring that if $\Xi \in {\mathfrak h}$, the function $ \zeta_{\omega} : = \langle \xi_\omega  , \Xi \rangle$ is a (complex valued) Hamiltonian for the vector field $\Xi$, namely the unique solution of
\[
- \bar \del \zeta _{\omega}  = \frac{_1}{^2} \, \omega (\Xi , -) ,
\]
which is normalized by
\[
\int_{M} \, \zeta_{\omega}  \, dvol_g =0 .
\]

With these notations, the result  obtained in \cite{AP2} reads~:
\begin{theo}
\label{blow2}
Assume that $(M, g, \omega)$ is a cscK compact complex manifold and that $p_1, \ldots, p_k \in M$ and $b_1, \ldots, b_k >0$ are chosen so that~:

\medskip

\begin{itemize}
\item[(i)] $\xi_\omega (p_1) , \ldots, \xi_\omega (p_k)$ span ${\mathfrak h}^*$ \\[3mm]
\item[(ii)] $\sum_{j=1}^k b_j^{n-1}  \, \xi_\omega (p_j)  = 0 \in {\mathfrak h}^* $ .
\end{itemize}

\medskip

Then there exists $\e_0 > 0$ such that, for all $\e \in (0, \e_0)$, there exists on $\tilde M = Bl_{p_1,\dots ,p_k}M$  a cscK metric $g_\e$ associated to the \K\ form
\[
\omega_\e \in \pi^{*} \, [\omega] - \e ^{2} \, (b_{1, \e} \, PD[E_1]
+ \ldots + b_{k, \e} \, PD[E_k]),
\]
where
\begin{equation}
|b_{j ,\e} - b_j | \leq c \, \e^{\frac{2}{2n+1}} .
\label{disto}
\end{equation}
Morever, the sequence of metrics $g_\e$ converges to $g$ in ${\mathcal C}^{\infty} (M \setminus \{p_1, \ldots, p_k\})$. \label{th:2}
\end{theo}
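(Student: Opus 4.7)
The plan is to invoke part (2) of Lemma \ref{constcoeff2} applied to the cscK metric $qg_\e$ polarized by $L_\e$, by producing a non-constant coefficient $a_{j_0}$ with $j_0\geq 2$ in the TYZ expansion of $T_{m(qg_\e)}$. I will focus on $j_0=2$. Since $qg_\e$ has constant scalar curvature, Lu's formula (\ref{coefflu}) reduces $a_2$, up to an additive constant, to $\tfrac{1}{24}\bigl(|R|^2-4|\Ric|^2\bigr)$, so it suffices to show that $\Phi_\e:=|R|^2-4|\Ric|^2$ is not a constant function on $\tilde M$ for every sufficiently small rational $\e$. It is immaterial whether $\Phi_\e$ is computed from $g_\e$ or from $qg_\e$, since these differ by the overall constant factor $q^{-2}$.

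The strategy is to compare $\Phi_\e$ at two strategically chosen points. Pick $q_0\in M\setminus\{p_1,\ldots,p_k\}$ at positive distance from every blow-up point, and $q_1$ on an exceptional divisor $E_j$. By Theorems \ref{blow1} and \ref{blow2}, $g_\e\to g$ in $C^\infty$ on compact subsets of $M\setminus\{p_1,\ldots,p_k\}$, so $\Phi_\e(q_0)$ converges as $\e\to 0$ to the corresponding value of $\Phi$ for the original metric $g$, and in particular stays uniformly bounded.

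Near $E_j$, the Arezzo-Pacard construction models $g_\e$ on the scalar-flat \K\ LeBrun-Simanca metric $\eta$ on $\widetilde{\C^n}=Bl_0\C^n$, rescaled by a positive power of $\e$ that tends to $0$. Since a homothety $\tilde g=\lambda g$ transforms $|R|^2$ and $|\Ric|^2$ by the common factor $\lambda^{-2}$, the value $\Phi_\e(q_1)$ equals, up to lower-order corrections, a negative power of $\e$ times $\bigl(|R_\eta|^2-4|\Ric_\eta|^2\bigr)$ evaluated at the corresponding point of the exceptional divisor of $\widetilde{\C^n}$. The decisive \emph{special property} of the LeBrun-Simanca model is that this invariant is not identically zero on the exceptional divisor: in complex dimension $2$, $\eta$ is the Eguchi-Hanson metric, which is Ricci-flat but non-flat, so $|R_\eta|^2-4|\Ric_\eta|^2=|R_\eta|^2>0$ there; for $n\geq 3$ the non-vanishing has to be read off from the explicit rotationally symmetric formula for $\eta$ and its curvature tensor. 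Choosing $q_1$ so that this invariant does not vanish, $|\Phi_\e(q_1)|\to+\infty$ as $\e\to 0$.

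Combining the two estimates, for all sufficiently small rational $\e$ one has $\Phi_\e(q_0)\neq \Phi_\e(q_1)$, hence $a_2$ is non-constant on $\tilde M$. Lemma \ref{constcoeff2}(2) applied to the cscK metric $qg_\e$ with polarization $L_\e$ then yields the finiteness of ${\mathcal B}_{g_B}$ for every $g_B\in{\mathcal B}(L_\e)$. The main obstacle in executing this plan is the non-vanishing claim for $|R_\eta|^2-4|\Ric_\eta|^2$ along the exceptional divisor in dimensions $n\geq 3$: this is precisely the point where properties specific to the LeBrun-Simanca model enter, as opposed to generic features of the gluing construction, and it requires a direct curvature computation using the explicit description of $\eta$.
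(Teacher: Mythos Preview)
Your proposal does not address the stated theorem. Theorem~\ref{blow2} is the Arezzo--Pacard existence result for cscK metrics on blow-ups in the presence of hamiltonian holomorphic vector fields; it is \emph{not} proved in this paper but only quoted from \cite{AP2}. What you have written is instead an attempt at Theorem~\ref{mainteor3bis} (equivalently Theorem~\ref{mainteor3}), whose conclusion is the finiteness of $\mathcal{B}_{g_B}$. So there is no proof of the assigned statement here at all.

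If one reads your argument as a proof of Theorem~\ref{mainteor3bis}, the overall strategy is correct and close to the paper's: both reduce to showing $a_2$ is non-constant and invoke Lemma~\ref{constcoeff2}(2). The paper, however, does this via Proposition~\ref{m=2}: it assumes $a_2(g_{\e_j})$ constant, rescales by $\e^{-2}$ near the exceptional divisor to obtain $g_{lbs}$ in the limit, and deduces that $|R_{g_{lbs}}|^2-4|\Ric_{g_{lbs}}|^2$ would have to be a constant on $Bl_0\C^n$; it then rules this out by an explicit asymptotic computation \emph{at infinity}, where the metric is a perturbation of the Euclidean one (equations~(\ref{a2m=2}) and~(\ref{a2m})). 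Your two-point comparison instead requires the same quantity to be nonzero \emph{on} the exceptional divisor of the model. This is a genuinely different and harder computation, and you do not carry it out.

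There is also a concrete error in your $n=2$ step: the model metric used in \cite{AP}, \cite{AP2} and in this paper (see~(\ref{eta2})) is the Burns/LeBrun--Simanca metric with potential $\tfrac{1}{2}|v|^2+\log|v|$, which is scalar-flat but \emph{not} Ricci-flat; it is not the Eguchi--Hanson metric. Hence your claim that $|R_\eta|^2-4|\Ric_\eta|^2=|R_\eta|^2>0$ on the exceptional divisor is unjustified, and the non-vanishing you need remains unproved in every dimension. The paper's route via the asymptotics at infinity sidesteps exactly this difficulty.
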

Therefore, in the presence of nontrivial hamiltonian holomorphic vector fields, the number of points which can be blown up, their position, as well as the possible \K\ classes on the blown up manifold have to satisfy some constraints.

Despite the fact we do not know explicitely the metrics given by the above
constructions, we want to show, that, at least for $\e$ sufficiently small, they cannot have
the second coefficient  $a_2$ in the TYZ expansion   equal to a constant.
\begin{prop}\label{m=2}
For  any family of metrics $g_{\e}$ constructed either by Theorem \ref{blow1} or Theorem \ref{blow2},
there exists $\e_1 >0$ such that the coefficent  $a_2(g_{\e})$ in TYZ expansion (see (\ref{coefflu}) above)  is not constant for $\e < \e_1$.
\end{prop}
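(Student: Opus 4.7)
The plan is to argue by contradiction, exploiting the two-scale nature of the Arezzo--Pacard construction.

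Since $g_\e$ has constant scalar curvature, both $\Delta\rho$ and $\rho^2$ in (\ref{coefflu}) are constants, hence $a_2(g_\e)$ is constant on $\tilde M$ if and only if the curvature invariant $\phi_\e := |R_{g_\e}|^2 - 4|\Ric_{g_\e}|^2$ is constant on $\tilde M$. I therefore assume for contradiction that $\phi_\e \equiv C_\e \in \R$ on $\tilde M$ for every sufficiently small $\e$.

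On the one hand, by the $C^\infty$-convergence $g_\e \to g$ on compact subsets of $M\setminus\{p_1,\dots,p_k\}$ granted by Theorems \ref{blow1}--\ref{blow2}, for any fixed $x_0\in M\setminus\{p_1,\dots,p_k\}$
\[
C_\e = \phi_\e(x_0) \;\longrightarrow\; |R_g(x_0)|^2 - 4|\Ric_g(x_0)|^2
\]
as $\e\to 0$. In particular the sequence $\{C_\e\}$ is bounded and $\e^4 C_\e \to 0$. On the other hand, the Arezzo--Pacard construction models $g_\e$ near each $p_j$ on a copy of the LeBrun--Simanca scalar-flat K\"ahler ALE metric $\eta$ on the blow-up of $\C^n$ at the origin: after rescaling by $\e$ in normal coordinates at $p_j$ so that the exceptional divisor has unit size, $g_\e$ is approximated by $\e^2\eta$, with an error that vanishes in $C^\infty_{\mathrm{loc}}$ as $\e\to 0$. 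Since under the homothety $g\mapsto\lambda^2 g$ both $|R|^2$ and $|\Ric|^2$ transform as $\lambda^{-4}$, for any point $q_0$ of the LeBrun--Simanca model with image $q_\e\in\tilde M$ under the rescaled identification one gets
\[
\e^4 \phi_\e(q_\e) \;\longrightarrow\; \psi_\eta(q_0) := |R_\eta(q_0)|^2 - 4|\Ric_\eta(q_0)|^2
\]
as $\e\to 0$. Combined with $\phi_\e(q_\e) = C_\e$ and $\e^4 C_\e \to 0$, this forces $\psi_\eta \equiv 0$ on all of the LeBrun--Simanca model.

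The main obstacle is ruling out the identity $\psi_\eta\equiv 0$, which is where the ``special properties of the LeBrun--Simanca model'' referenced in the introduction must enter. Since $\eta$ is ALE, $\psi_\eta\to 0$ at infinity, so the identity cannot be refuted by the asymptotic behavior alone; one must use the structure of $\eta$ near the exceptional divisor. For $n=2$, the explicit $U(2)$-invariant Burns metric on the blow-up of $\C^2$ at the origin is scalar-flat but not Weyl-flat, and the pointwise identity $|R|^2-4|\Ric|^2 = |W^-|^2 - 2|\Ric|^2$ (valid for any scalar-flat K\"ahler $4$-manifold, with $W^-$ the anti-self-dual Weyl tensor) reduces $\psi_\eta\equiv 0$ to $|W^-_\eta|^2\equiv 2|\Ric_\eta|^2$, which can be shown to fail by a direct radial computation with the Burns potential. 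For $n\geq 3$, the $U(n)$-invariance of Simanca's model reduces $\psi_\eta\equiv 0$ to an ordinary differential equation on the radial curvature functions whose incompatibility with the boundary behavior of $\eta$ at the exceptional divisor and at infinity gives the desired contradiction. This final radial/ODE verification is the step I expect to require the most care.
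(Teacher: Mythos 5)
Your reduction is exactly the paper's: argue by contradiction, use cscK to replace constancy of $a_2$ by constancy of $\phi_\e=|R_{g_\e}|^2-4|\Ric_{g_\e}|^2$, rescale by $\e^{-2}$ near an exceptional divisor so that $g_\e$ converges to the LeBrun--Simanca metric $g_{lbs}$ on $Bl_0\C^n$, and conclude that $|R_{g_{lbs}}|^2-4|\Ric_{g_{lbs}}|^2$ would have to be constant (indeed, as you correctly add, identically zero, since $C_\e$ stays bounded). The problem is that the entire substance of the proposition lies in the step you have left unverified, namely that $|R_{g_{lbs}}|^2-4|\Ric_{g_{lbs}}|^2\not\equiv 0$; you only sketch two possible strategies (the Weyl decomposition $|R|^2-4|\Ric|^2=|W^-|^2-2|\Ric|^2$ for $n=2$, an ODE analysis for $n\geq 3$) and explicitly defer the computation. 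As it stands this is a genuine gap: without that verification nothing has been proved.

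Moreover, your guiding remark that ``the identity cannot be refuted by the asymptotic behavior alone'' and that ``one must use the structure of $\eta$ near the exceptional divisor'' is mistaken, and it points you away from the simplest route. The paper refutes the identity precisely by the asymptotics at infinity: using the explicit potential $\frac{1}{2}|v|^2+\log|v|$ for $n=2$ (and the expansion $\frac{1}{2}|v|^2-|v|^{4-2n}+o(|v|^{2-2n})$ for $n\geq 3$) one computes the leading-order terms of the curvature along the line $v_2=\dots=v_n=0$ and finds
$$|R_{g_{lbs}}|^2-4|\Ric_{g_{lbs}}|^2=-\frac{113}{|v_1|^8}+o(|v_1|^{-8})\quad (n=2),\qquad -\frac{a}{|v_1|^{4n}}+o(|v_1|^{-4n}),\ a>0\quad (n\geq 3).$$
Knowing only that the quantity tends to zero at infinity would indeed be useless, but the nonvanishing of the \emph{leading coefficient} of the decay already shows the function is strictly negative far out on the end, hence not identically zero, with no need to look near the exceptional divisor at all. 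If you want to complete your proof, the most economical fix is to carry out exactly this expansion of $g_{lbs}$, its inverse, and the components of $R$ and $\Ric$ at infinity; your Weyl-tensor reformulation for $n=2$ is consistent with this but adds nothing, and for $n\geq 3$ the absence of a closed-form potential makes the asymptotic expansion (rather than a global ODE analysis) the natural tool.
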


\begin{proof}
We first argue by contradiction on $a_2$: suppose there exists a sequence $\e_j \rightarrow 0$ s.t.
$a_2(g_{\e_{j}})$ are indeed constants. Being cscK metrics this is equivalent to say that $|R_{g_{\e_{j}}}|^2 - 4|Ric_{g_{\e_{j}}}|^2 = C_{\e_{j}}$, for some constant $C_{\e_j}$. Scaling the metrics $g_{\e}$ around a point on any of the exceptional divisors by a factor $\frac{1}{\e^2}$, we know that $\frac{1}{\e^2}g_{\e}$ converges smoothly (as $\e \rightarrow 0$) to
the LeBrun-Simanca $g_{lbs}$ metric on $Bl_0 \C ^n$. In particular we would have
$|R_{g_{lbs}}|^2 - 4|Ric_{g_{lbs}}|^2 = lim_{\e_{j}\rightarrow 0} \e_j^4 C_{\e_{j}}$.
This would imply that $|R_{g_{lbs}}|^2 - 4|Ric_{g_{lbs}}|^2$ is constant on
$Bl_0 \C ^n$, contradicting  the following lemma.
\end{proof}

\begin{lemma}
Having called $(v_1, \dots, v_n)$ the standard euclidean coordinates on $Bl_0 \C ^n \setminus K$, for some compact $K$ containing the exceptional divisor, then
\begin{equation}\label{a2m=2}
(|R_{g_{lbs}}|^2 - 4|Ric_{g_{lbs}}|^2) (v_1 , 0) = - \frac{113}{|v_1|^8} + o(|v_1|^{-8} )
\end{equation}
 for $n=2$ as $v_1$ goes to infinity, and
 \begin{equation} \label{a2m}
 (|R_{g_{lbs}}|^2 - 4|Ric_{g_{lbs}}|^2) (v_1,0, \dots ,  0) = - \frac{a}{|v_1|^{4n}} + o(|v_1|^{-4n})
 \end{equation}
with $a>0$,  for $n \geq 3$ as $v_1$ goes to infinity.
\end{lemma}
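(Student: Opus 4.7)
My plan is to prove the Lemma by a direct computation relying on the explicit form of the LeBrun--Simanca metric as a $U(n)$-invariant scalar-flat K\"ahler metric on $Bl_0 \C^n$. First I would recall that, away from the exceptional divisor, $g_{lbs}$ is determined by a K\"ahler potential $\phi = \phi(s)$, with $s = |v|^2 = |v_1|^2 + \cdots + |v_n|^2$, solving the scalar-flat equation. For $n=2$ this is the explicit Burns potential $\phi(s) = s + \log s$, while for $n \geq 3$ one writes $\phi(s) = s + \psi(s)$, where $\psi$ is determined by the ODE coming from $\rho_{g_{lbs}} = 0$ and admits the asymptotic expansion
\[
\psi(s) = -c_n\, s^{-(n-2)} + O\!\left(s^{-(n-1)}\right) \quad \text{as } s \to \infty,
\]
for some explicit positive constant $c_n$.

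By $U(n)$-invariance, the functions $|R_{g_{lbs}}|^2$ and $|\Ric_{g_{lbs}}|^2$ depend only on $s$, so it suffices to compute at the point $(v_1, 0, \ldots, 0)$ in the frame $\partial_{v_1}, \ldots, \partial_{v_n}$. Standard formulas for $U(n)$-invariant K\"ahler potentials give, at this point,
\[
g_{1\bar 1} = \phi'(s) + s\phi''(s), \qquad g_{k\bar k} = \phi'(s) \quad (k \geq 2),
\]
with all off-diagonal entries vanishing; analogous block-structured expressions express the Christoffel symbols, the Riemann tensor $R_{i\bar j k \bar l}$ and the Ricci tensor $\Ric_{i\bar j}$ as rational functions of $\phi'$, $\phi''$, $\phi'''$, with $\phi'$ and $\phi'+s\phi''$ appearing in the denominators.

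Having assembled $|R|^2 - 4|\Ric|^2$ from these components and simplified it using the scalar-flat equation (which allows the elimination of the highest derivative of $\phi$), I would then substitute $\phi'(s) = 1 + \psi'(s)$, $\phi''(s) = \psi''(s)$, and so on, and expand as $s \to \infty$. Since the Euclidean background $\phi \equiv s$ makes all curvatures vanish, the leading nonzero term comes entirely from the lowest-order deviation; in every case the orders of decay combine to give a leading term of order $s^{-2n} = |v_1|^{-4n}$ at the point $(v_1, 0, \ldots, 0)$, matching the stated rate.

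The main obstacle is the algebraic bookkeeping: producing the explicit formula for the Riemann tensor, correctly assembling $|R|^2 - 4|\Ric|^2$, and pinning down the leading coefficient with the correct (negative) sign. For $n=2$ the Burns potential is completely explicit and one must verify that the sum of all contributions collapses to exactly $-113$; this relies on a cascade of cancellations forced by the scalar-flat equation. For $n \geq 3$ one does not need the precise numerical value of the coefficient, only its strict positivity once written as $-a/|v_1|^{4n}$, and this is precisely where the specific form of the LeBrun--Simanca ODE (in particular the positivity of $c_n$) must be exploited to rule out accidental vanishing and to determine the sign.
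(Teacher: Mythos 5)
Your overall strategy is the same as the paper's: both proofs are direct asymptotic computations of $|R|^2-4|\Ric|^2$ at the point $(v_1,0,\dots,0)$, using the explicit Burns potential for $n=2$ and the asymptotic expansion $\tfrac12|v|^2-|v|^{4-2n}+o(|v|^{2-2n})$ of the potential for $n\ge 3$. The paper simply differentiates the metric components in coordinates and reads off the leading terms of $R_{i\bar j k\bar l}$ (observing, as you implicitly do, that the Christoffel-squared terms are of lower order), whereas you organize the computation through the radial function $\phi(s)$ and the scalar-flat ODE; that reorganization is legitimate, and your order-counting, giving a leading term of size $|v_1|^{-4n}$ with the error term in $\psi$ contributing only at lower order, is correct.

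The genuine flaw is in your plan for the sign when $n\ge 3$. Since the Euclidean background is flat, the leading term of $|R|^2-4|\Ric|^2$ is homogeneous of degree two in the leading deviation $-c_n s^{2-n}$; it equals $c_n^2\, Q(n)\, |v_1|^{-4n}+o(|v_1|^{-4n})$ for a universal quantity $Q(n)$ built from the second derivatives of $s^{2-n}$. Hence the positivity of $c_n$ cannot ``determine the sign'': it only rules out vanishing, and replacing $c_n$ by $-c_n$ changes nothing. The assertion $a>0$ is exactly the assertion $Q(n)<0$ for all $n\ge 3$, and since $|R|^2-4|\Ric|^2$ is a difference of two nonnegative quantities this is not automatic; it must be, and in the paper is, established by computing $Q(n)$ explicitly as a polynomial in $n$ (the paper obtains $(n-1)\bigl[-12(n-1)^3+(1-2n)^2+2(1-n)-4(2-n)^2\bigr]+(2-n)^2$ and checks it is negative for $n\ge3$). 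The analogous remark applies to $n=2$: the value $-113$ does not come from ``cancellations forced by the scalar-flat equation'' but from the brute-force tally $|R|^2\simeq 31\,|v_1|^{-8}$ and $|\Ric|^2\simeq 36\,|v_1|^{-8}$. So your setup is right, but the explicit evaluation that constitutes the actual content of the lemma remains to be done, and the shortcut you propose for the sign in the $n\ge3$ case does not exist.
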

\begin{proof}
Recall the following properties of the $g_{lbs}$: by construction, the \K\ form $\omega_{lbs}$ is
invariant under the action of $U(n)$. If $v = (v_{1}, \ldots, v_n)$
are complex coordinates in ${\mathbb C}^n -\{ 0 \}$, the \K\ form
$\omega_{lbs}$ can be written as
\begin{equation}
\omega_{lbs}  : =\frac{i}{2}   \del \bar \del  \left(\frac{_1}{^{2}} |v|^{2}
+E_n (|v|) \right) \label{etage}
\end{equation}
More precisely
\begin{equation}
\omega_{lbs}  =\frac{i}{2}   \del \bar \del   \left(\frac{_1}{^{2}} |v|^{2} +
\log |v| \right) \label{eta2} \end{equation} in dimension $n=2$. In
dimension $n \geq 3$, even though there is no explicit formula, we
have
\begin{equation}
\omega_{lbs}  = \frac{i}{2}   \del \bar \del   \left( \frac{_1}{^{2}} |v|^{2}
-
 |v|^{4-2n} + o (|v|^{2-2n}) \right)
\label{etam}
\end{equation}

Having in complex dimension $2$ an explicit \K\ potential, the proof of the theorem reduces to estimating the relevant quantities. For this purpose we get an explicit formula for the matrix $G_{lbs}$ which represents the metric $g_{lsb}$, namely:

$$G_{lbs} = \frac{1}{|v|^4}
\begin{pmatrix}
1+|v_1|^2+|v|^4 & \bar{v_1} v_2 \\
v_1 \bar{v_2} & 1+|v_2|^2+|v|^4
\end{pmatrix}.
$$
Observe that $\det(\omega_{lbs}) = 1 + \frac{1}{|v|^2} + {\mbox{higher order terms}}$, as $|v|$ goes to infinity.
From now on we will write $f \simeq h$ for two functions (or two matrixes) which agree up to higher order terms as $|v|$ goes to infinity.

Clearly $G_{lbs} ^{-1} \simeq Id$.  A straightforward computation now shows that
$$\frac{\partial g_{i\bar{l}}}{\partial v_k }  = \bar{v}_l |v|^{-4}\delta_{ik} -2|v|^{-6}\bar{v}_k(\delta_{il} + v_i\bar{v}_l) \,\, $$
and
\begin{equation*}
\begin{split}\frac{\partial^2 g_{i\bar{l}}}{\partial v_k \partial \bar{v}_j}=
 |v|^{-4}\delta_{lj}\delta_{ik} - 2 |v|^{-6}(\delta_{il}\delta_{kj} + \bar{v}_lv_j\delta_{ik} + \bar{v}_kv_i\delta_{lj} + \bar{v_l}v_i\delta_{kj})+\\
 + 6 |v|^{-8}(\bar{v}_kv_j\delta_{il} + \bar{v}_l\bar{v}_kv_iv_j) \,\, .
 \end{split}
 \end{equation*}

Hence, restricting on the complex line $v_2=0$, we get that the only non zero
contributions among the second derivatives of the metric are given by

$$\frac{\partial^2 g_{1\bar{1}}}{\partial v_1 \partial \bar{v}_1} = -\frac{5}{|v_1|^4},$$
$$\frac{\partial^2 g_{2\bar{1}}}{\partial v_1 \partial \bar{v}_2} = -\frac{1}{|v_1|^4},$$
$$\frac{\partial^2 g_{1\bar{1}}}{\partial v_2 \partial \bar{v}_2} = -\frac{2}{|v_1|^4},$$
$$\frac{\partial^2 g_{2\bar{2}}}{\partial v_2 \partial \bar{v}_2} = \frac{1}{|v_1|^4},$$

Now recall that
\begin{equation}
\label{Riem}
|R|^2 = \sum_{i,j,k,l,p,q,r,s =1}^{n} \overline{g^{i\bar{p}}}g^{j\bar{q}}\overline{g^{k\bar{r}}}g^{l\bar{s}} R_{i\bar{j}k\bar{l}}\overline{R_{p\bar{q}r\bar{s}}}
\end{equation}
and
$$R_{i\bar{j}k\bar{l}} = -\frac{\partial^2 g_{i\bar{l}}}{\partial v_k \partial \bar{v}_j} +
\sum_{p,q=1}^n g^{p\bar{q}}\frac{\partial g_{i\bar{p}}}{\partial v_k} \frac{\partial g_{q\bar{l}}}{\partial \bar{v}_j} .$$

This readily implies that the lowest order terms of the contributions of the Riemann tensor are
$$R_{1\bar{1}1\bar{1}} \simeq    \frac{5}{|v_1|^4} \,\, ,  $$
$$R_{1\bar{2}2\bar{1}} \simeq    \frac{2}{|v_1|^4} \,\, ,  $$
$$R_{1\bar{2}1\bar{2}} \simeq    \frac{1}{|v_1|^4} \,\, ,  $$
$$R_{2\bar{2}2\bar{2}} \simeq    -\frac{1}{|v_1|^4} \,\, ,  $$
hence $|R|^2 \simeq \frac{31}{|v_1|^8}$ on the line $v_2=0$.
Recall that
\begin{equation}
\label{Ric}
|Ric|^2 = \sum_{i,j,k,l =1}^{n} \overline{g^{i\bar{k}}}g^{j\bar{l}}Ric_{i\bar{j}}\overline{Ric_{k\bar{l}}}\qquad \mbox{and}
\end{equation}
$$Ric_{k\bar{p}} = g^{i\bar{l}}R_{i\bar{l}k\bar{p}}.$$
Hence, restricting on the complex line $v_2=0$, we get
$$|Ric|^2 \simeq Ric_{1\bar{1}}\overline{Ric_{1\bar{1}}} +  Ric_{2\bar{2}}\overline{Ric_{2\bar{2}}} \simeq \frac{36}{|v_1|^8}.$$
In summary we have proved that
$$a_2(g_{lbs}) _{|_{v_{2}=0}}= \left[|R|^2 -4|Ric|^2\right] _{|_{v_{2}=0}} \simeq - \frac{113}{|v_1|^8}$$
which is exactly  (\ref{a2m=2}).

Not having an explicit expression for the LeBrun-Simanca metric when $n>2$ we can give only estimates instead of precise formulae, though the line of the argument will be the same as in complex dimension $2$.

This time
$$\omega_{lbs} \simeq (1+|v|^{2-2n} \delta_{ij} + v_i\bar{v_j} |v|^{-2n})dv_i\wedge d\bar{v_j},$$
$$\frac{\partial g_{i\bar{p}}}{\partial v_k }  \simeq (1-n)\bar{v}_k |v|^{-2n}\delta_{ip}
+ |v|^{-2n}\bar{v}_p\delta_{ik} -n v_i\bar{v}_p\bar{v}_k|v|^{-2-2n} \,\, $$
and\\

$\frac{\partial^2 g_{i\bar{p}}}{\partial v_k \partial \bar{v}_j} \simeq
|v|^{-2n}[(1-n)\delta_{kj}\delta_{il}+ \delta_{jl}\delta_{ik}]
+
 |v|^{-2-2n}$
$[-n(1-n)v_j\bar{v}_k\delta_{il} - n\bar{v}_lv_j\delta_{ik}-nv_i\bar{v}_k\delta_{jl}-nv_i\bar{v}_l\delta_{kj}] + |v|^{-4-2n}[n(1+n)v_iv_j\bar{v}_l\bar{v}_k] .$\\

Hence, restricting on the complex line $v_2=\dots = v_n=0$, we get that the only non zero
contributions among the second derivatives of the metric are given by

$$\frac{\partial^2 g_{1\bar{1}}}{\partial v_1 \partial \bar{v}_1} \simeq
\frac{2(n-1)^2}{|v_1|^{2n}},$$
$$\frac{\partial^2 g_{1\bar{r}}}{\partial v_1 \partial \bar{v}_r} \simeq
\frac{1-n}{|v_1|^{2n}},$$
$$\frac{\partial^2 g_{1\bar{1}}}{\partial v_r \partial \bar{v}_r}  \simeq
\frac{1-2n}{|v_1|^{2n}},$$
$$\frac{\partial^2 g_{r\bar{r}}}{\partial v_1 \partial \bar{v}_1}  \simeq
\frac{(1-n)^2}{|v_1|^{2n}},$$
$$\frac{\partial^2 g_{r\bar{r}}}{\partial v_r \partial \bar{v}_r}  \simeq
\frac{2-n}{|v_1|^{2n}},$$
where $r$ is an index ranging from $2$ to $n$.
This immediately gives, on the line $v_1$,
$$|R|^2 \simeq |v_1|^{-4n}[4(n-1)^4 + (n-1)(1-2n)^2 + (1-n)^2 + (1-n)^2 + (2-n)^2] ,$$
and
$$|Ric|^2 \simeq |v_1|^{-4n}\left[-4(n-1)^4 + (n-1)(2-n)^2\right]$$
hence

\noindent
$|R|^2-4|Ric|^2   \simeq$

\noindent
$\simeq |v_1|^{-4n}\{(n-1) [-12(n-1)^3 + (1-2n)^2 +2(1-n)-4(2-n)^2 ] + (2-n)^2$
which is readily seen to be negative for $n\geq 3$. This ends the proof of claim  (\ref{a2m}) in all dmensions.
\end{proof}

We are now in the position to  prove Theorem \ref{mainteor3}. which in the previous notation reads:

\begin{theo}\label{mainteor3bis}
Let $\e = \frac{p}{q} < \e_1$ with $\e_1$ as in Proposition \ref{m=2} and let $L_{\e} \rightarrow M$ be a polarization for the \K\ class
of the metric $qg_{\e}$. Then,  for each   $g_B\in {\mathcal B}(L_{\e})$,  the set
${\mathcal B_{g_B}}$ is finite.
\end{theo}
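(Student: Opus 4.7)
The plan is to deduce Theorem~\ref{mainteor3bis} directly from Lemma~\ref{constcoeff2}, applied to the pair $(qg_{\e}, L_{\e})$. The first thing to check is that the ambient hypotheses of that lemma are met: $qg_{\e}$ is cscK, since rescaling a cscK metric by a positive real preserves the cscK condition, and by the very choice of $L_{\e}$ the form $q\omega_{\e}$ lies in $c_1(L_{\e})$, so $qg_{\e}$ is polarized by $L_{\e}$. After that, the whole argument reduces to verifying one of the two alternative conditions (1) or (2) of Lemma~\ref{constcoeff2} for $(qg_{\e}, L_{\e})$.

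I would aim for condition (2), namely the existence of a non-constant coefficient $a_{j_0}$, $j_0 \geq 2$, in the TYZ expansion of $T_{m(qg_{\e})}$, and take $j_0 = 2$. Proposition~\ref{m=2} provides exactly the needed information, but phrased for the unscaled metric $g_{\e}$: for $\e < \e_1$ the coefficient $a_2(g_{\e}, x)$ given by Lu's formula (\ref{coefflu}) is not constant on $\tilde M$. Transferring this from $g_{\e}$ to $qg_{\e}$ is a purely formal step: using the standard homothety behaviour $\rho_{qg} = q^{-1}\rho_g$, $\Delta_{qg}\rho_{qg} = q^{-2}\Delta_g\rho_g$, $|R_{qg}|^2 = q^{-2}|R_g|^2$, $|Ric_{qg}|^2 = q^{-2}|Ric_g|^2$, one reads off from (\ref{coefflu}) the scaling relation
\[
 a_2(qg_{\e}, x) = q^{-2}\, a_2(g_{\e}, x),
\]
so that $a_2(qg_{\e}, \cdot)$ is non-constant as well. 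Condition (2) of Lemma~\ref{constcoeff2} is therefore satisfied and the finiteness of $\mathcal B_{g_B}$ for every $g_B \in \mathcal B(L_{\e})$ follows at once.

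The real work of the theorem is not in this final reduction but in the preceding Proposition~\ref{m=2}. The main obstacle to expect is thus to pin down the sign and the leading asymptotic of $|R_{g_{lbs}}|^2 - 4|Ric_{g_{lbs}}|^2$ along a ray going to infinity on $Bl_0 \C^n$, via an explicit computation based on the $U(n)$-symmetric LeBrun--Simanca potential. Once the negativity of the leading term (as displayed in (\ref{a2m=2}) and (\ref{a2m})) is secured, the blow-up rescaling $\e^{-2}g_{\e} \to g_{lbs}$ rules out constancy of $a_2(g_{\e})$ for any sufficiently small $\e$, and the remainder of the proof of Theorem~\ref{mainteor3bis} is just the short, formal reduction outlined above.
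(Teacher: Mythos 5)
Your proof is correct and follows exactly the paper's route: the paper's own proof of Theorem~\ref{mainteor3bis} is the one-line combination of Proposition~\ref{m=2} with part~(2) of Lemma~\ref{constcoeff2}. The only thing you add is the explicit check that $a_2$ scales by $q^{-2}$ under the homothety $g_{\e}\mapsto qg_{\e}$, which the paper leaves implicit and which is a harmless (indeed welcome) clarification.
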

\begin{proof}
The proof follows  by combining Proposition \ref{m=2} with part (2) of Lemma \ref{constcoeff2}.
\end{proof}

\begin{remar}\rm
We do not know if  in the previous theorem  ${\mathcal B}_{g_B}$  is empty.
 For example, in \cite{vezu} it is shown that there exist cscK polarizations $L$ on the blow up $M$ of $\C P^2$ at four points (all but one aligned) constructed from Theorem \ref{blow2} such that $(M, L^m)$ is not asymptotically Chow polystable for $m$ large enough, so that ${\mathcal B}_c(L)$ is finite.
One  could try to use part (1) instead of part (2) of Lemma \ref{constcoeff2} to prove Theorem \ref{mainteor3bis},
namely to show that $mg_{\e}$
is not projectively induced for all $m$.
\end{remar}

{\small }

\end{document}